\documentclass[a4paper,12pt]{article}

\setlength{\textwidth}{16cm}
\setlength{\textheight}{23cm}
\setlength{\oddsidemargin}{0mm}
\setlength{\topmargin}{-1cm}

\usepackage{latexsym}
\usepackage{amsmath}
\usepackage{amssymb}
\usepackage{enumerate}

\usepackage{color}

\usepackage{theorem}
\newtheorem{theorem}{Theorem}[section]

\newtheorem{lemma}[theorem]{Lemma}
\newtheorem{corollary}[theorem]{Corollary}

\newtheorem{MT}{Main Theorem}

\theorembodyfont{\rmfamily}
\newtheorem{proof}{\textmd{\textit{Proof.}}}

\makeatletter

\@addtoreset{equation}{section}
\makeatother

\newcommand{\qedd}{\hfill \Box}

\newcommand{\wt}{\widetilde}

\def\diam{\mathop{\mathrm{Diam}}\nolimits}

\title{A generalized maximal diameter sphere theorem
\footnote{Mathematics Subject Classification (2010)\,: 53C22.}
\footnote{Keywords: cut locus, generalized first variation formula, geodesic triangle, maximal diameter sphere theorem, Toponogov comparison theorem, two-sphere of revolution, radial sectional curvature}}
\author{Nathaphon BOONNAM}
\date{}
\pagestyle{plain}

\begin{document}

\maketitle

\begin{abstract}
We prove that if a complete connected $n$-dimensional Riemannian manifold $M$ has radial sectional curvature at a base point $p\in M$ bounded from below by the radial curvature function of a two-sphere of revolution $\wt M$ belonging  to a certain class, then the diameter of $M$ does not exceed that of $\wt M.$ Moreover, we prove that if the diameter of $M$ equals that of  $\wt M,$
then $M$ is isometric to the $n$-model of $\wt M.$
The class of a two-sphere of revolution employed in our main theorem is very wide. For example, this class contains both ellipsoids of prolate type and spheres of constant sectional curvature.
Thus our  theorem contains both the maximal diameter sphere theorem proved by Toponogov \cite{T}   and the radial curvature version by the present author \cite{B} as a corollary.
\end{abstract}

\section{Introduction}
The following result by Bonnet \cite{Bo} is
one of classical and fundamental results in global differential geometry:
``{\it  If the sectional curvature of a complete Riemannian manifold $M$ is not less than a positive constant $H,$ then the diameter of $M$ does not exceed $\pi/\sqrt H. $}"

Toponogov considered the case where the diameter of $M$ attains  the maximal number $\pi/\sqrt H,$ and 
in 1959, he  proved the following theorem called the maximal diameter sphere theorem:
\begin{theorem}
{\normalfont(\cite{T})}\label{th1.1}
Let $M$ be a complete connected $n$-dimensional  Riemannian manifold whose sectional curvature is bounded from below by a positive constant $H$. If the diameter of $M$ equals $\pi/\sqrt H$, then $M$ is isometric to the $n$-dimensional sphere with radius $1/\sqrt H.$
\end{theorem}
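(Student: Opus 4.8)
The plan is to use Toponogov's comparison theorem to force the two points realizing the diameter to behave as a pair of antipodal poles, and then to read off from the equality case of the index comparison that the metric must coincide with that of the round sphere.

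First I would note that, since $\diam M=\pi/\sqrt H<\infty$ and $M$ is complete, $M$ is compact, so the diameter is attained: fix $p,q\in M$ with $d(p,q)=\pi/\sqrt H=:\pi R$, where $R:=1/\sqrt H$. The conceptually crucial first claim is that every point lies on a minimal geodesic from $p$ to $q$, i.e.\ $d(p,x)+d(x,q)=\pi R$ for all $x$. To see this, fix $x$, put $a:=d(p,x)$, and form the hinge at $p$ made of a minimal geodesic from $p$ to $q$ and one from $p$ to $x$, with included angle $\alpha$. Toponogov's theorem (hinge version), with model space the sphere $S^n(R)$ of curvature $H$, bounds $d(x,q)$ by the third side $\tilde d$ of the comparison hinge; but the side from $\bar p$ has length $\pi R$, so $\bar q$ is antipodal to $\bar p$ on $S^n(R)$ and hence $\tilde d=\pi R-a$ independently of $\alpha$. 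Thus $d(x,q)\le \pi R-a$, while the triangle inequality gives the reverse, and the claim follows.

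Consequently every unit-speed geodesic from $p$ is minimal on $[0,\pi R]$ and ends at $q$, the cut locus of $p$ is exactly $\{q\}$, and no such geodesic has an interior conjugate point on $(0,\pi R)$. Since $\sec\ge H$, the Rauch comparison theorem forces the first conjugate point along any geodesic to occur no later than parameter $\pi R$; combining the two facts, the first conjugate point of $p$ along every geodesic $\gamma$ occurs exactly at $\pi R$. The hard part will be to convert this borderline conjugate behaviour into pointwise rigidity of the curvature. I would do this with the index form: fixing a parallel unit normal field $E$ along $\gamma:[0,\pi R]\to M$ and testing with $W(t):=\sin(\sqrt H\,t)\,E(t)$, which vanishes at both endpoints, the Morse index theorem (no interior conjugate points) gives $I(W,W)\ge 0$, whereas subtracting the identically vanishing model index form yields
\[
I(W,W)=\int_0^{\pi R}\bigl(H-K(\gamma'(t),E(t))\bigr)\sin^2(\sqrt H\,t)\,dt\le 0,
\]
since $K\ge H$. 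Hence the integrand vanishes and $K(\gamma'(t),E(t))=H$ for all $t$ and all parallel normal $E$; equivalently every plane containing a radial direction $\gamma'(t)$ has curvature $H$.

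Finally I would translate this into an explicit isometry. Because $K(\gamma'(t),w)=H$ for all $w\perp\gamma'(t)$, the symmetric radial curvature operator $w\mapsto R(w,\gamma')\gamma'$ equals $H\cdot\mathrm{Id}$ on the normal bundle, so every normal Jacobi field with $J(0)=0$ solves $J''+HJ=0$ and equals $\tfrac{\sin(\sqrt H\,t)}{\sqrt H}P(t)$ for a parallel field $P$. Writing $g$ in geodesic polar coordinates about $p$ therefore gives $g=dr^2+R^2\sin^2(r/R)\,d\sigma^2$ with $d\sigma^2$ the standard metric on the unit sphere in $T_pM$, which is exactly the round metric of $S^n(R)$. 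Since $\exp_p$ is a diffeomorphism of the open ball of radius $\pi R$ onto $M\setminus\{q\}$, it intertwines this metric with that of $S^n(R)\setminus\{\bar q\}$ and extends continuously over the collapsed point, yielding a global isometry; thus $M$ is isometric to the $n$-sphere of radius $1/\sqrt H$. The matching of diameters confirms that no proper spherical space form, which would have strictly smaller diameter, can intervene.
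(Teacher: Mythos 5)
Your proof is correct, but it takes a genuinely different route from the paper's. The paper never proves Theorem \ref{th1.1} directly --- it is quoted from \cite{T} --- and instead recovers it as the constant-curvature corollary of the Main Theorem, whose proof runs through the generalized Toponogov comparison theorem for two-spheres of revolution (Theorem \ref{th2.2}, imported from \cite{ST}), the angle-rigidity statement Theorem \ref{th2.7} (proved via the generalized first variation formula, Lemma \ref{Nlem}, and the monotonicity of the comparison angle $\tilde\theta(t)$, Lemma \ref{KL2}), and the reduction in Section 4 (Lemmas \ref{lem3.1}, \ref{lem.2-sphere}, \ref{lem3.2}), which shows that $\diam(M)=\diam(\wt M)$ forces the base point $p$ itself to admit a point at maximal distance, after which the isometry is taken from \cite{B} (cf.\ \cite{IMS}). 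You instead run the classical direct argument: the hinge version of Toponogov's theorem, exploiting that in the round model the side opposite a hinge with one leg of length $\pi/\sqrt H$ equals $\pi/\sqrt H-a$ \emph{independently of the angle}, yields $d(p,x)+d(x,q)=\pi/\sqrt H$ for every $x$; hence the cut locus of $p$ is $\{q\}$, radial geodesics are minimal on $[0,\pi/\sqrt H]$ with first conjugate point exactly at the endpoint; the equality case of the index comparison with $W=\sin(\sqrt H\,t)E$ (your identity $I(W,W)=\int_0^{\pi/\sqrt H}(H-K)\sin^2(\sqrt H\,t)\,dt$ is right, since the model index form vanishes on that interval, and $I(W,W)\ge 0$ because minimality excludes interior conjugate points) pins all radial curvatures to $H$; and the Jacobi equation $J''+HJ=0$ then gives $g=dr^2+H^{-1}\sin^2(\sqrt H\,r)\,d\sigma^2$ in polar coordinates, with the isometry extending over $q$ by the standard Myers--Steenrod argument. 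What each approach buys: yours is short and self-contained for the constant-curvature case, but its key trick --- the comparison side being independent of the hinge angle --- uses the antipodal symmetry of the round sphere and does not survive nonconstant models; the paper's machinery replaces exactly this step by Lemmas \ref{lem.2-sphere} and \ref{lem3.2}, and therefore works under the much weaker hypothesis of a radial curvature bound at a single base point against a wide class of two-spheres of revolution. Notably, your final two steps (radial curvature rigidity plus the Jacobi-field computation producing a warped product) are essentially the construction the paper delegates to \cite{B} once $d(p,q)=\pi$ is achieved, so the two proofs converge at the end.
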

Toponogov proved the above theorem by making use of a comparison theorem, which is
now called the Toponogov comparison theorem.
In fact, the Toponogov comparison theorem plays  an important role in the proof of the theorem.
 
Recently, the present author generalized the theorem above by considering  a certain  two-sphere of revolution as a model surface.
\begin{theorem}
{\normalfont(\cite{B})}\label{th1.2}
Let $M$ be a complete connected  $n$-dimensional Riemannian manifold with a base point $p\in M$ whose radial sectional curvature at $p$ is bounded from below by the radial curvature function of a model surface  $\wt M$. Then, the diameter of $M$ does not exceed the diameter of $\wt M$. Furthermore if the diameter of $M$ equals that of $\wt M,$ then $M$ is isometric to the $n$-dimensional model $\wt M^n$.
\end{theorem}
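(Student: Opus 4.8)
The plan is to run everything through a radial-curvature analogue of the Toponogov comparison theorem applied to geodesic triangles having the base point $p$ as a vertex. I use the following structural features of the model, which hold for the class in question: $\wt M$ has two poles $\tilde p,\tilde q$, its cut locus from $\tilde p$ is the single point $\tilde q$, every geodesic from $\tilde p$ is minimizing up to length $L:=d(\tilde p,\tilde q)=\diam\wt M$, and $\{\tilde p,\tilde q\}$ is the only pair of points realizing $\diam\wt M$. To bound the diameter, take arbitrary $x,y\in M$, join each to $p$ by a minimal geodesic, and apply the hinge form of the comparison theorem at $p$: writing $a=d(p,x)$, $b=d(p,y)$ and $\theta$ for the angle at $p$, the lower curvature bound gives $d(x,y)\le d(\tilde x,\tilde y)$, where $\tilde x,\tilde y\in\wt M$ lie at distances $a,b$ from $\tilde p$ and subtend angle $\theta$. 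Since $d(\tilde x,\tilde y)\le\diam\wt M=L$, this yields $d(x,y)\le L$, hence $\diam M\le\diam\wt M$.

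For the rigidity statement, suppose $\diam M=L$ and choose $x_0,y_0$ with $d(x_0,y_0)=L$. Applying the estimate above to the triangle $px_0y_0$ forces $d(\tilde x_0,\tilde y_0)=L$, so $\{\tilde x_0,\tilde y_0\}=\{\tilde p,\tilde q\}$; therefore one of $x_0,y_0$ equals $p$, and there is a point $q\in M$ with $d(p,q)=L$. Thus $p$ is itself a pole whose distance function attains its maximum $L$ at $q$. Now fix an arbitrary $x$ and apply the hinge comparison at $p$ to the hinge formed by $px$ and $pq$. Because every point of $\wt M$ at distance $L$ from $\tilde p$ is the pole $\tilde q$, the comparison endpoint is $\tilde q$ and the comparison side has length $L-d(p,x)$; hence $d(x,q)\le L-d(p,x)$, which together with the triangle inequality gives $d(p,x)+d(x,q)=L$. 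Consequently every point of $M$ lies on a minimal geodesic from $p$ to $q$, every unit-speed geodesic issuing from $p$ is minimizing on $[0,L]$ and reaches $q$ at time $L$, and $\exp_p$ maps the open $L$-ball of $T_pM$ diffeomorphically onto $M\setminus\{q\}$ while collapsing its boundary sphere to $q$; this is exactly the polar picture of $\wt M^n$, so $M$ is homeomorphic to $S^n$.

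It remains to match the metrics. Since all geodesics from $p$ refocus at $q$ at time $L$ and none has an interior conjugate point (each is minimizing up to $L$), the first conjugate point of $p$ along every radial geodesic occurs precisely at $t=L$---the very value that the bound $K\ge G(t)$ forces as an upper bound for the conjugate distance. The equality case of the Rauch comparison theorem then pins the radial sectional curvature of $M$ to $G(t)$ along every radial geodesic. Feeding this back into the Jacobi equation shows that every Jacobi field along a radial geodesic that vanishes at $p$ has norm $f(t)$, so in geodesic polar coordinates the metric of $M$ is $dt^2+f(t)^2\,d\sigma_{n-1}^2$, which is precisely the metric of $\wt M^n$. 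Fixing a linear isometry $I\colon T_{\tilde p}\wt M^n\lra T_pM$, the map $\exp_{\tilde p}(t\tilde v)\mapsto\exp_p(tI\tilde v)$ is then well defined and is the required isometry.

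I expect the genuine difficulty to lie in the comparison theorem itself rather than in the argument above. Unlike the constant-curvature case underlying Theorem \ref{th1.1}, erecting a comparison triangle in a general two-sphere of revolution and proving the hinge inequality demands precise control of minimal geodesics and of the cut locus of $\wt M$, which is exactly why the model must be confined to the stated class. The second delicate point is the Rauch rigidity step: one must verify that equality in the conjugate-distance estimate truly propagates to pointwise equality $K=G(t)$ for all radial planes, and not merely along a single direction.
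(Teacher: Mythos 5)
There is a genuine gap, and it sits exactly at the load-bearing point of your rigidity argument: the assertion that $\{\tilde p,\tilde q\}$ is the \emph{only} pair of points realizing $\diam(\wt M)$, which you list among the ``structural features'' of the model and never prove. This is not a routine feature of two-spheres of revolution: it fails the moment the curvature monotonicity in (1.2) is not strict (on the round sphere every antipodal pair is diametral, and Lemma \ref{lem.2-sphere} of the paper describes exactly such non-pole diametral pairs in the wider class of the Main Theorem). For the class of Theorem \ref{th1.2} the claim is true, but proving it is the essential content that makes the strict hypothesis (1.2) necessary --- one must show, e.g.\ by a second-variation/Sturm argument along the two meridian arcs of length $\pi$, that a pair of non-pole points on a common meridian with $d(\tilde p,\tilde x)+d(\tilde p,\tilde y)=\pi$ cannot be at distance $\pi$. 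Notice that (1.2) is used nowhere else in your proposal, which is a strong signal that you have assumed the crux rather than derived it. Two smaller points: the comparison theorem available here (Theorem \ref{th2.2}) is a triangle version, not a hinge version --- your hinge statements need to be derived from it via the monotonicity Lemma \ref{KL2.1} (though for the diameter bound the triangle version alone suffices, since $d(x,y)=d(\tilde x,\tilde y)\leq\diam(\wt M)$ directly); and the identity $d(p,x)+d(x,q)=\pi$ follows more simply from the perimeter bound of Lemma \ref{lem3.0} plus the triangle inequality than from a hinge argument.

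It is worth contrasting your route with the paper's. The paper does not prove Theorem \ref{th1.2} directly (it cites \cite{B}) but proves the stronger Main Theorem, whose class includes the round sphere, so the uniqueness claim you rely on is \emph{false} there and the proof must proceed differently: given $x,y\neq p$ with $d(x,y)=\pi$, Lemma \ref{lem3.1} yields $\angle(xpy)=\pi$; Lemma \ref{lem3.2}, combining Theorem \ref{th2.2}, Lemma \ref{lem.2-sphere} and the monotonicity Lemma \ref{KL2.1}, shows the comparison angle at $\tilde p$ equals $\angle(xpz)$ and that every $z$ lies on a minimal segment from $x$ to $y$; then the equality case of the comparison theorem (Theorem \ref{th2.7}, whose proof via the generalized first variation formula and the monotone comparison angle occupies all of Section 3) is applied to $\triangle(pxq_\pi)$ with $q_\pi=\gamma(\pi)$ to manufacture a point at distance $\pi$ from $p$. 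Only after that does one land in the case ``$p$ admits $q$ with $d(p,q)=\pi$,'' which both you and the paper handle by the standard argument (the paper delegates it to \cite{IMS}); your Rauch--Jacobi sketch of that final step is essentially right, though it still needs the index-form details (that every geodesic from $p$ minimizes on $[0,\pi]$, and that vanishing of the index form on $fE$ for every parallel normal field $E$ forces $K=G$ on all radial planes and forces $fE$ to be Jacobi). To salvage your shorter route for Theorem \ref{th1.2} itself, you must supply a proof of the diametral-pair uniqueness from (1.1)--(1.2); without it the argument begs the question.
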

A {\it model surface } $\wt M$ in the above theorem means a two-sphere of revolution $\wt M$ with a pair of poles $\tilde p$ and $\tilde q$ satisfying  the following two properties:
\begin{description}
	\item[{\normalfont (1.1)}] $\wt M$ has a reflective symmetry with respect to the {\it equator} $\{\tilde x\in\wt M| d(\tilde p,\tilde x)=d(\tilde x,\tilde q)\},$ where $d(\cdot,\cdot)$ denotes the Riemannian distance.
	\item[{\normalfont (1.2)}] The Gaussian curvature $G$ of $\wt M$ is strictly decreasing along a meridian from the point $\tilde p$ to the point on the equator.
\end{description}

 Theorem \ref{th1.2} does not contain Theorem \ref{th1.1} as a corollary, since the Gaussian curvature of the unit sphere is constant. 
In this paper, we generalize the maximal diameter sphere theorem as follows. The following theorem contains both theorems above  as a corollary.
\begin{MT}
Let $M$ be a complete connected  $n$-dimensional Riemannian manifold with a base point $p$ such that $M$ has  radial sectional curvature at $p$ bounded from below by the radial curvature function of a two-sphere of revolution $\wt M$ with a pair of poles $\tilde p,\tilde q.$  Suppose that the cut locus of any point on $\wt M$ distinct from both poles is a subset of the half meridian opposite to the point.
Then the diameter of $ M$ does not exceed that of $\wt M$.
Moreover if the diameter of $M$ equals  that of $\wt M,$ then $M$ is isometric to the $n$-model $\wt M^n$ of $\wt M.$
\end{MT}
\begin{corollary}
If a complete connected $n$-dimensional Riemannian manifold $M$ has  radial sectional curvature at  a point $p\in M$ bounded from below by 1, then the diameter of $M$ does not exceed $\pi.$ Moreover if the diameter of $M$ equals $\pi,$ then $M$ is isometric to the $n$-dimensional unit sphere. 
\end{corollary}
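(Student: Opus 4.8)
The plan is to obtain the Corollary as the simplest special case of the Main Theorem, taking the model surface to be the round sphere itself. Concretely, I would set $\wt M = S^2(1)$, the two-dimensional sphere of constant Gaussian curvature $1$, and fix as its pair of poles $\wt p,\wt q$ an arbitrary pair of antipodal points. Writing the rotationally symmetric metric as $dr^2+\sin^2 r\,d\theta^2$, with $r$ the Riemannian distance from $\wt p$, one sees at once that $\wt M$ is a two-sphere of revolution and that its radial curvature function—the Gaussian curvature viewed as a function of $r$—is the constant $1$, since $-(\sin r)''/\sin r=1$. Hence the curvature hypothesis of the Corollary, that the radial sectional curvature of $M$ at $p$ be bounded below by $1$, is exactly the curvature hypothesis of the Main Theorem for this choice of $\wt M$. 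It is worth observing here that $S^2(1)$ does \emph{not} satisfy the strict monotonicity condition (1.2), so it is not a model surface in the sense of Theorem \ref{th1.2}; this is precisely why the present Main Theorem, whose only structural requirement is the cut locus condition, recovers Theorem \ref{th1.1} while Theorem \ref{th1.2} does not.

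Next I would verify the remaining hypothesis of the Main Theorem, namely that the cut locus of every point of $\wt M$ distinct from the two poles is contained in the half meridian opposite to that point. On the round sphere the cut locus of a point $\wt x\neq\wt p,\wt q$ consists of the single antipodal point of $\wt x$, whose longitude differs by $\pi$ from that of $\wt x$; it therefore lies on the opposite meridian, so the required inclusion holds (indeed trivially, the cut locus being a single point). With all hypotheses now in force, I would record that $\diam\wt M=\pi$.

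Applying the Main Theorem then immediately gives $\diam M\le\diam\wt M=\pi$, and that equality forces $M$ to be isometric to the $n$-model $\wt M^{\,n}$ of $\wt M$. It remains only to identify this model: the $n$-model of $\wt M$ carries the warped-product metric $dr^2+\sin^2 r\,d\sigma^2$, where $d\sigma^2$ is the standard metric of the unit $(n-1)$-sphere, and this is exactly the round metric of the $n$-dimensional unit sphere $S^n(1)$. Thus equality yields $M$ isometric to $S^n(1)$, which completes the proof. Since the statement is a direct specialization of the Main Theorem, there is no genuine analytic obstacle; the only points deserving care are the bookkeeping verifications that the round sphere is an admissible model in the sense of the theorem—in particular that its degenerate (one-point) cut locus still satisfies the meridian condition—and that its $n$-model is the standard $S^n(1)$ rather than some other rotationally symmetric space.
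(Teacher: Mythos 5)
Your proposal is correct and is exactly the paper's (implicit) argument: the paper states the Corollary without proof as the specialization of the Main Theorem to $\wt M=S^2(1)$, whose antipodal cut loci trivially satisfy the half-meridian condition and whose $n$-model $dr^2+\sin^2 r\,d\Theta^2$ is the round unit $n$-sphere. Your added verifications (including the observation that the constant-curvature sphere fails condition (1.2) of Theorem \ref{th1.2}, which is why the Main Theorem is needed) are accurate and complete.
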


If the base point $p$ admits a point $q$ satisfying $d(p,q)=\diam{(\wt M)}$, then it is not difficult to prove that $M$ is isometric to the $n$-dimensional model.
In fact,  under the assumption, Itokawa, Machigashira, and Shiohama proved the maximal diameter sphere theorem  (Theorem 1.4 in \cite{IMS})  for any two-spheres of revolution.

A noncompact version corresponding to the Main Theorem is proved by Shiohama and Tanaka \cite{ShT}.
\begin{theorem}\label{non-compact version}
Let $M$ be a complete noncompact connected Riemannian $n$-manifold with a base point $p\in M,$
at which the radial sectional curvature is bounded from below  by the radial curvature  function of a noncompact complete model surface of revolution $(\widetilde M, dr^2+f(r)^2d\theta^2)$. 
If $\int_1^\infty f(t)^{-2}dt=\infty,$ then $M$ is isometric to the $n$-model $\wt M^n$ 
of $\wt M.$
\end{theorem}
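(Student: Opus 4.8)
The plan is to treat this as the noncompact counterpart of the maximal-diameter rigidity in the Main Theorem, with the divergence $\int_1^\infty f(t)^{-2}\,dt=\infty$ playing the role that the equality $\diam(M)=\diam(\wt M)$ plays in the compact case. Since $M$ is complete and noncompact, Hopf--Rinow furnishes a ray $\gamma\colon[0,\infty)\lra M$ with $\gamma(0)=p$, and I would use its Busemann function $b_\gamma=\lim_{t\to\infty}\big(t-d(\cdot,\gamma(t))\big)$ as the noncompact substitute for the distance to a farthest point $\tilde q$. The first step is to reduce everything to the behaviour of radial Jacobi fields: along any unit-speed minimal geodesic $\sigma$ issuing from $p$, the hypothesis that the radial sectional curvature at $p$ is bounded below by $G(r)=-f''(r)/f(r)$ gives, through the Rauch comparison theorem, $|J(t)|\le f(t)\,|J'(0)|$ for every normal Jacobi field with $J(0)=0$, and equality on an interval forces the radial sectional curvature along $\sigma$ to equal $G$ there. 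The whole problem is thus to show that this comparison is everywhere saturated.

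Two technical pillars are needed. The first is a model lemma asserting that $\int_1^\infty f(t)^{-2}\,dt=\infty$ forces every meridian of $\wt M$ to be a ray and, more importantly, forces the ideal boundary of $\wt M$ to collapse to a single asymptotic class: writing the $\theta$-sweep of a unit-speed geodesic with Clairaut constant $\nu$ as $\int \nu\,\big(f\sqrt{f^2-\nu^2}\big)^{-1}\,dr$, the divergence of $\int f^{-2}$ makes this sweep unbounded as such geodesics approach a meridian, so all rays from $\tilde p$ are mutually asymptotic. This is the exact noncompact analogue of there being a unique farthest point, and it is what the extremal hypothesis buys us. The second pillar is the Toponogov comparison theorem for the revolution model $\wt M$ together with the generalized first variation formula: applying the comparison to the geodesic triangles with vertices $p,\gamma(t),x$ and letting $t\to\infty$ (the variation formula handling the passage through the cut locus), I would obtain a Busemann-type comparison bounding $b_\gamma(x)+\rho(x)$, with $\rho=d(p,\cdot)$, by the corresponding model quantity, in which equality is governed precisely by the length of the geodesic circles $S(p,\rho(x))$.

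The final step is the rigidity analysis. Combining the two pillars, the unique-ideal-point structure supplied by the integral condition removes all slack in the limiting Toponogov inequality, so the comparison must be an equality for every $x$ and every radial direction; equivalently the radial Jacobi fields satisfy $|J(t)|=f(t)\,|J'(0)|$ for all $t$. By the equality case of Rauch this makes the radial sectional curvature identically $G(r)$ along every geodesic from $p$, and since the model has no conjugate points along its meridians ($f>0$ for $r>0$) neither does $M$; as the rays fill $M$, the map $\exp_p$ is a diffeomorphism and in geodesic polar coordinates the metric of $M$ is exactly $dt^2+f(t)^2\,d\Theta^2$, with $d\Theta^2$ the round metric on the unit $(n-1)$-sphere, that is, $M=\wt M^n$. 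I expect the genuine obstacle to be the saturation step: proving that the limiting comparison has zero deficit. This is where the divergence of $\int_1^\infty f(t)^{-2}\,dt$ must be used quantitatively, controlling the asymptotic length of the geodesic circles of $M$ against those of $\wt M$ by a Cohn--Vossen/total-curvature type estimate and showing that any strict deficit would accumulate and contradict the existence of a ray running to infinity.
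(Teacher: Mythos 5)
You should first be aware that the paper contains no proof of this statement: Theorem \ref{non-compact version} is quoted from Shiohama and Tanaka \cite{ShT}, so your proposal has to be measured against the argument there. Your two ``pillars'' do correspond to genuine ingredients of that argument: the Clairaut-relation computation showing that $\int_1^\infty f(t)^{-2}dt=\infty$ makes the $\theta$-sweep of every non-meridian geodesic of $\wt M$ unbounded (so such geodesics wind indefinitely, fail to minimize to far-away points, and the model has a single point at infinity), and the generalized Toponogov comparison applied to triangles with vertices $p$, $x$, $\gamma(t)$ as $t\to\infty$, with the generalized first variation formula handling non-smooth points. Up to there you are on the right road.

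The genuine gap is the step you yourself flag as ``the genuine obstacle'': the saturation of the comparison is the entire content of the theorem, and neither mechanism you propose for it works. First, a Cohn--Vossen/total-curvature estimate is intrinsically two-dimensional; for an $n$-dimensional $M$ with only a radial lower curvature bound there is no Gauss--Bonnet-type identity controlling the size of geodesic spheres, so ``any strict deficit would accumulate and contradict the existence of a ray'' has no quantitative content. Second, the claim that the unique-ideal-point structure ``removes all slack in the limiting Toponogov inequality'' is a non sequitur: uniqueness of the point at infinity concerns the meridians (the rays) of $\wt M$ and does not by itself force equality in the comparison for an arbitrary $x\in M$; moreover your proposed limit inequality for $b_\gamma(x)+\rho(x)$ has no well-defined model quantity to saturate, since the comparison point $\tilde x$ is not pinned down as $t\to\infty$ and the analogous expression is not extremal even in the model itself. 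The route in \cite{ShT} is different at exactly this point: one first uses the winding lemma inside the comparison theorem to show that \emph{every} unit-speed geodesic emanating from $p$ is a ray, so that $\exp_p$ has no cut points and $M$ is diffeomorphic to $\mathbb{R}^n$; only then does the equality discussion in the comparison force $|J(t)|=f(t)|J'(0)|$ for all radial Jacobi fields and hence $g=dt^2+f(t)^2d\Theta^2$. Your final paragraph quietly assumes this very conclusion (``as the rays fill $M$, the map $\exp_p$ is a diffeomorphism''), so as written the sketch is circular precisely where the divergence hypothesis has to do its work.
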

Finally, the author would like to express his deepest thanks to Professor Minoru Tanaka for his various discussions on the maximal diameter sphere theorem. This work was supported by the government budget revenue of Prince of Songkla University. 

\section{Preliminaries and fundamental definitions}

Let us review the definition of  a two-sphere of revolution $(\widetilde M,\tilde g),$ which is defined in \cite{ST}. A Riemannian manifold $(\widetilde M,\tilde g)$   homeomorphic to a 2-dimensional sphere is called a {\it two-sphere of revolution} if  $(\widetilde M,\tilde g)$  admits a point $\tilde p$ whose 
cut locus  has a single cut point $\tilde q,$ and the Riemannian metric $\tilde g$ is expressed as $dr^2+f(r)^2d\theta^2$ on $\widetilde M\setminus \{\tilde p,\tilde q\},$ by using geodesic polar coordinates $(r,\theta)$ around $\tilde p.$ Here   $f$ is  a smooth  function on $[0,d(\tilde p,\tilde q)]$ satisfying $f(0)=f(d(\tilde p,\tilde q))=0.$ Notice that $f$ is positive  on  $(0,d(\tilde p,\tilde q)),$ since the point $\tilde q$ is a unique cut point of $\tilde p.$ Each point of the pair   $\tilde p$ and $\tilde q$ is called  a {\it pole} of the two-sphere of revolution. The typical example of a two-sphere of revolution is the unit sphere in Euclidean space. Incidentally, its Riemannian metric is expressed as $dr^2+\sin^2rd\theta^2$, i.e., the warping function $f$ is $\sin r.$ Each geodesic passing through the poles $\tilde p$ and $\tilde q$ is called a {\it meridian.} Notice that each meridian is a periodic geodesic of length $2d(\tilde p,\tilde q),$ since any meridian is the set of fixed points of an isometry on $\wt M.$
For each two-sphere of revolution $(\wt M,dr^2+f(r)^2d\theta^2)$,
we may define the $n$-model $(\wt M^n,g^*)$ diffeomorphic to the unit  $n$-sphere with a Riemannian metric 
$$g^*=dr^2+f(r)^2d\Theta^2,$$
where $d\Theta^2$ denotes the Riemannian metric of the  $(n-1)$-dimensional unit sphere.

Let $M$ denote a complete connected Riemannian manifold and a point $p$ in $M,$ which is called a {\it base point.}
A 2-dimensional plane $\sigma$ in the tangent space $T_qM$ of $M$ at  a point $q\neq p$
is called a {\it radial plane}  at $q$ if there exists a unit speed minimal geodesic segment
$\gamma:[0,d(p,q)]\to M$ joining $p$ to $q$ such that $\gamma' (d(p,q)  ) \in\sigma,$ and the sectional curvature $K(\sigma)$ of the plane $\sigma$ is called a {\it radial sectional  curvature}. 
The manifold $M$ is said to {\it have  radial sectional curvature at $p$ bounded from below by the radial curvature function of a two-sphere of revolution} $\wt M,$ if for any radial plane $\sigma$ at any point $q$ distinct from $p,$ the radial sectional curvature $K(\sigma) $ is not less than the Gaussian curvature of $\wt M$ at
$\mu(d(\tilde p,\tilde q)).$ Here $\mu$ denotes a unit speed meridian emanating from $\tilde p.$

From now on, $(\wt M,dr^2+f(r)^2d\theta^2)$ denotes a two-sphere of revolution with a pair of poles $\tilde p$ and $\tilde q.$
By scaling the Rimannian metric, we may assume that $\pi=d(\tilde p,\tilde q).$

\begin{lemma}\label{lemd1}
The perimeter of  any geodesic triangle $\triangle(\tilde p\tilde x\tilde y)$ in  $\wt M$
does not exceed $2\pi.$
\end{lemma}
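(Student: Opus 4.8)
The plan is to exploit the second pole $\tilde q$ as an auxiliary vertex. Writing the perimeter of $\triangle(\tilde p\tilde x\tilde y)$ as
$$L = d(\tilde p,\tilde x) + d(\tilde p,\tilde y) + d(\tilde x,\tilde y),$$
I would control the side $\tilde x\tilde y$ by routing it through $\tilde q$ via the triangle inequality $d(\tilde x,\tilde y)\le d(\tilde x,\tilde q)+d(\tilde q,\tilde y)$, and then reduce everything to the behaviour of the two distance functions $d(\tilde p,\cdot)$ and $d(\tilde q,\cdot)$.

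The central fact I would establish is the identity
$$d(\tilde p,\tilde x)+d(\tilde q,\tilde x)=\pi \qquad \text{for every } \tilde x\in\wt M.$$
To prove it, I would use that every point $\tilde x$ distinct from the poles lies on a unique meridian $\mu$, which is a geodesic joining $\tilde p$ to $\tilde q$ of length $\pi=d(\tilde p,\tilde q)$. Writing $\tilde x=\mu(r_0)$ with $r_0\in(0,\pi)$, one has $d(\tilde p,\tilde x)=r_0$, because $\tilde q$ is the only cut point of $\tilde p$, so the meridian arc $\mu|_{[0,\pi]}$ is minimal up to $\tilde q$. For $d(\tilde q,\tilde x)$ I would invoke the symmetry of the poles: since $\tilde p$ and $\tilde q$ play symmetric roles on the two-sphere of revolution, $\tilde p$ is the unique cut point of $\tilde q$, and reading the same meridian from $\tilde q$ shows that the arc from $\tilde q$ to $\tilde x$ of length $\pi-r_0$ is also minimal. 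Hence $d(\tilde q,\tilde x)=\pi-r_0$, and the identity follows (the cases $\tilde x=\tilde p$ and $\tilde x=\tilde q$ being immediate).

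With the identity in hand, the conclusion is a short computation:
$$L \le d(\tilde p,\tilde x)+d(\tilde p,\tilde y)+d(\tilde x,\tilde q)+d(\tilde q,\tilde y)=\bigl(d(\tilde p,\tilde x)+d(\tilde q,\tilde x)\bigr)+\bigl(d(\tilde p,\tilde y)+d(\tilde q,\tilde y)\bigr)=2\pi.$$
The only real obstacle is the identity itself, and specifically the claim that the meridian arc from $\tilde q$ to $\tilde x$ is minimizing. This rests on establishing that $\tilde p$ is the unique cut point of $\tilde q$; I expect this to follow from the reflective structure of the warped metric $dr^2+f(r)^2d\theta^2$ together with $f(0)=f(\pi)=0$ (equivalently, from rewriting the metric in geodesic polar coordinates centred at $\tilde q$ using the radial coordinate $\pi-r$), so that distance from $\tilde q$ is measured along meridians exactly as distance from $\tilde p$ is.
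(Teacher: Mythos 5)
Your proposal is correct and follows essentially the same route as the paper: bound $d(\tilde x,\tilde y)$ by $d(\tilde x,\tilde q)+d(\tilde q,\tilde y)$ via the triangle inequality and use the identity $d(\tilde p,\cdot)+d(\tilde q,\cdot)=\pi$, which the paper simply calls trivial. Note that your excursion through the cut locus of $\tilde q$ and pole symmetry is heavier than necessary: minimality of the meridian arc from $\tilde q$ to $\tilde x$ follows in one line from the reverse triangle inequality $d(\tilde q,\tilde x)\geq d(\tilde p,\tilde q)-d(\tilde p,\tilde x)=\pi-r_0$ together with the arc of length $\pi-r_0$, so the identity needs no structural facts about $\mathrm{Cut}(\tilde q)$ at all.
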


\begin{proof}
Let $\triangle(\tilde p\tilde x\tilde y)$ denote any geodesic triangle in $\wt M.$
By the triangle inequality,
we get
\begin{equation}\label{eq:lemd1-1}
d(\tilde x,\tilde y)\leq d(\tilde x,\tilde q)+d(\tilde q,\tilde y).
\end{equation}
Since it is trivial that
$d(\tilde x,\tilde q)=\pi-d(\tilde p, \tilde x)$ and $d(\tilde q,\tilde y)=\pi-d(\tilde p, \tilde y)$ hold, we obtain
$d(\tilde x,\tilde y)+d(\tilde p,\tilde x)+d(\tilde p,\tilde y)\leq2\pi,$ by \eqref{eq:lemd1-1}.
$\qedd$
\end{proof}
\begin{lemma}\label{lemd2}
The diameter ${\diam}(\wt M):=\max\{ d(\tilde x,\tilde y)| \tilde x,\tilde y\in \wt M \}$ of $\wt M$ is $\pi=d(\tilde p,\tilde q).$
\end{lemma}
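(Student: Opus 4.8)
The plan is to prove the two inequalities $\diam(\wt M)\ge\pi$ and $\diam(\wt M)\le\pi$ separately. The lower bound is immediate: since we have normalized the metric so that $d(\tilde p,\tilde q)=\pi$, the two poles already realize a distance of $\pi$, so $\diam(\wt M)\ge d(\tilde p,\tilde q)=\pi$. Hence the entire content of the lemma lies in establishing the upper bound $d(\tilde x,\tilde y)\le\pi$ for every pair of points $\tilde x,\tilde y\in\wt M$.

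For the upper bound I would fix arbitrary $\tilde x,\tilde y$ and exploit both poles at once. Applying the triangle inequality through $\tilde p$ gives $d(\tilde x,\tilde y)\le d(\tilde p,\tilde x)+d(\tilde p,\tilde y)$, while applying it through $\tilde q$ gives $d(\tilde x,\tilde y)\le d(\tilde x,\tilde q)+d(\tilde q,\tilde y)$, which is precisely \eqref{eq:lemd1-1} from the proof of Lemma \ref{lemd1}. The structural input I would then invoke is the identity already used there: every point lies on a meridian joining the two poles, and this meridian realizes the pole-to-pole distance $\pi$, so that $d(\tilde x,\tilde q)=\pi-d(\tilde p,\tilde x)$ and $d(\tilde q,\tilde y)=\pi-d(\tilde p,\tilde y)$ for all $\tilde x,\tilde y$ (the relation holds at the poles as well). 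Adding the two triangle inequalities now causes the radial terms $d(\tilde p,\tilde x)$ and $d(\tilde p,\tilde y)$ to cancel, leaving $2\,d(\tilde x,\tilde y)\le 2\pi$, i.e. $d(\tilde x,\tilde y)\le\pi$. Since $\tilde x,\tilde y$ were arbitrary, this yields $\diam(\wt M)\le\pi$, and combined with the lower bound we conclude $\diam(\wt M)=\pi=d(\tilde p,\tilde q)$.

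There is essentially no serious obstacle in this argument; it is a short consequence of the triangle inequality once the right two applications are combined. The only point requiring care is the justification of the identity $d(\tilde x,\tilde q)=\pi-d(\tilde p,\tilde x)$, which rests on the defining property of a two-sphere of revolution that the cut locus of $\tilde p$ is exactly the single point $\tilde q$; this ensures that meridians minimize all the way to $\tilde q$ and hence that the radial coordinate and the distance-to-$\tilde q$ are complementary. Apart from noting this, the proof is immediate.
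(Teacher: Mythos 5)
Your proof is correct and is essentially the paper's argument: the paper combines Lemma \ref{lemd1} (whose proof is exactly your triangle inequality through $\tilde q$ plus the identity $d(\tilde x,\tilde q)=\pi-d(\tilde p,\tilde x)$) with the triangle inequality through $\tilde p$, and you have simply unwound that lemma and added the two inequalities directly. The only cosmetic difference is that your uniform addition avoids the paper's separate treatment of the case where $\tilde x$ or $\tilde y$ coincides with a pole, and you make explicit the justification of the complementarity identity via the cut locus of $\tilde p$, which the paper dismisses as trivial.
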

\begin{proof}
Choose a pair of points $\tilde x$ and $\tilde y$ satisfying $d(\tilde x,\tilde y)={\diam} (\wt M).$
If $\tilde x$ or $\tilde y$ equals $\tilde p,$ then it is clear that ${\diam}(\wt M)=\pi,$ since $\tilde q$ is the farthest point from $\tilde p.$
Hence we may assume that $\tilde p\ne \tilde x, \tilde y.$
By applying Lemma \ref{lemd1} to the geodesic triangle $\triangle(\tilde p\tilde x\tilde y)$, we obtain
\begin{equation}\label{eq:lemd2-1}
d(\tilde p,\tilde x)+d(\tilde x,\tilde y)+d(\tilde p,\tilde y)\leq 2\pi.
\end{equation}
By the triangle inequality,
we have
\begin{equation}\label{eq:lemd2-2}
d(\tilde x,\tilde y)\leq d(\tilde p,\tilde x)+d(\tilde p,\tilde y).
\end{equation}
By \eqref{eq:lemd2-1} and \eqref{eq:lemd2-2},
we get $d(\tilde x,\tilde y)\leq \pi.$ Thus, the diameter of $\wt M$ is $\pi.$
$\qedd$
\end{proof}


We need  the following  generalized Toponogov comparison theorem for proving the Main Theorem. 
The proof of the following  theorem is  given in \cite{ST}.
\begin{theorem}\label{th2.2}
Let $M$ be a complete connected  $n$-dimensional Riemannian manifold with a base point $p$ such that $M$ has  radial sectional curvature at $p$ bounded from below by the radial curvature function of a two-sphere of revolution $\wt M$ with a pair of poles $\tilde p$ and $\tilde q.$  Suppose that the cut locus of any point on $\wt M$ distinct from its two poles is a subset of the half meridian opposite to the point. Then for each geodesic triangle $\triangle (pxy)$ in $M$, there exists a geodesic triangle $\triangle (\tilde p \tilde x \tilde y)$ in $\wt M$ 
such that
\begin{equation}\label{eq2.1}
d(p,x)=d(\tilde p, \tilde x),\quad 
d(p,y)=d(\tilde p, \tilde y),\quad 
d(x,y)=d(\tilde x, \tilde y),
\end{equation}
and such that
\begin{equation}\label{eq2.2}
\angle(pxy) \geq \angle(\tilde p \tilde x \tilde y),\quad 
\angle(pyx) \geq \angle(\tilde p \tilde y \tilde x),\quad 
\angle(xpy) \geq \angle(\tilde x \tilde p \tilde y).
\end{equation}
Here,  $d(\cdot,\cdot)$ denotes the Riemannian distance and $\angle (pxy)$ denotes the angle at the vertex $x$ of the geodesic triangle $\triangle (pxy)$.
\end{theorem}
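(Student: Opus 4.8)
The plan is to split the statement into the construction of the comparison triangle and the three angle inequalities, and to reduce the latter, as far as possible, to a single comparison between radial distance functions along the side opposite $p$. For the construction, I first note that a conjugate-point (Rauch) comparison along any minimal geodesic of $M$ issuing from $p$, made against a meridian of $\wt M$ whose first conjugate point from $\tilde p$ is the antipodal pole $\tilde q$ at distance $\pi$ (where $f$ vanishes), shows that such geodesics cannot minimize past distance $\pi$; hence $d(p,x),d(p,y)\le\pi$. Writing $(a,b,c):=(d(p,x),d(p,y),d(x,y))$, I then use the rotational symmetry of $\wt M$: fixing $\tilde x$ at distance $a$ from $\tilde p$ on one meridian and letting $\tilde y$ run at distance $b$ on the meridian whose angular coordinate differs by $\phi\in[0,\pi]$, the function $\phi\mapsto d(\tilde x,\tilde y)$ is continuous and nondecreasing, with minimum $|a-b|$ at $\phi=0$. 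Once one checks, using the triangle inequality together with the perimeter bound of Lemma \ref{lemd1}, that $c$ lies in the range of this function, the intermediate value theorem supplies a $\phi$ with $d(\tilde x,\tilde y)=c$, producing a comparison triangle that satisfies \eqref{eq2.1} and has $\angle(\tilde x\tilde p\tilde y)=\phi$.

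Next I reduce the angle inequalities at $x$ and $y$ to a radial comparison. Let $\gamma\colon[0,c]\to M$ be a minimal geodesic from $x$ to $y$, set $\rho(s):=d(p,\gamma(s))$, and let $\tilde\gamma,\tilde\rho$ be the corresponding data in the comparison triangle. Taking the sides $px$ and $py$ as the distinguished minimal connections, the generalized first variation formula yields $\rho'(0^+)\le-\cos\angle(pxy)$ and $\rho'(c^-)\ge\cos\angle(pyx)$, while in the model (smooth away from $\tilde q$) the corresponding identities hold with equality. Since $\rho(0)=\tilde\rho(0)=a$ and $\rho(c)=\tilde\rho(c)=b$, everything follows from the global inequality
\begin{equation*}
\rho(s)\ \ge\ \tilde\rho(s),\qquad 0\le s\le c.
\end{equation*}
Indeed, matching endpoint values then forces $\rho'(0^+)\ge\tilde\rho'(0^+)$ and $\rho'(c^-)\le\tilde\rho'(c^-)$, and chaining these with the variation (in)equalities and the monotonicity of $\cos$ gives exactly the first two inequalities of \eqref{eq2.2}.

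The heart of the proof is therefore the inequality $\rho\ge\tilde\rho$, together with the vertex inequality at $p$. The radial curvature hypothesis enters through the Jacobi fields along the minimal geodesics from $p$ to $\gamma(s)$, whose relevant sectional curvatures are radial and hence bounded below by the Gaussian curvature of $\wt M$ at the matching radius; this turns into a differential relation for $\rho$ of the form $\rho''\le(\log f)'(\rho)\,(1-(\rho')^2)$, with equality for $\tilde\rho$ in the model. I would not attempt to deduce $\rho\ge\tilde\rho$ from a crude maximum principle applied to $\rho-\tilde\rho$ (the sign of $(\log f)''$ obstructs this); instead I would follow Toponogov's strategy: prove the comparison first for thin triangles, where Rauch's theorem applies directly to the Jacobi fields with no cut-locus interference, and then treat a general triangle by subdividing the opposite side, applying the thin-triangle comparison to each sub-triangle with apex $p$, and gluing the resulting model sub-triangles by a straightening argument. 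The vertex inequality $\angle(xpy)\ge\angle(\tilde x\tilde p\tilde y)$ I would obtain from the companion hinge form of the same comparison: it gives $c\le h(\omega)$, where $h(\omega)$ is the $\wt M$-length of the side opposite $\tilde p$ in a hinge at $\tilde p$ with legs $a,b$ and angle $\omega:=\angle(xpy)$; since $h$ is strictly increasing and $h(\angle(\tilde x\tilde p\tilde y))=c$ by construction, this yields $\angle(\tilde x\tilde p\tilde y)\le\omega$.

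The main obstacle is the globalization across cut loci. In $M$ the function $\rho$ is only semiconcave and fails to be smooth where $\gamma$ meets the cut locus of $p$, so the Jacobi-field comparison is valid only on an open set and must be propagated through the singular parameters by support (barrier) functions and the generalized first variation formula; likewise the subdivision-and-gluing must be carried out so that the broken model geodesics can actually be straightened. This is exactly where the hypothesis on $\wt M$ is indispensable: the assumption that the cut locus of every non-polar point lies in the opposite half-meridian is what makes both the comparison-triangle map $\phi\mapsto d(\tilde x,\tilde y)$ and the hinge function $h$ genuinely monotone, and what guarantees that minimal geodesics between two non-polar points of $\wt M$ behave predictably enough for the straightening to succeed. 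In short, the curvature estimate is routine Rauch comparison; controlling the cut-locus behavior on both sides so that the local estimate globalizes is the delicate part.
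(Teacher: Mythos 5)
Your proposal must first be measured against what the paper actually does with Theorem \ref{th2.2}: the paper does \emph{not} prove it, but quotes it with the remark that the proof is given in \cite{ST} (Sinclair--Tanaka). Against the known proof there, your outline reproduces the standard architecture faithfully: the reduction of the two side-angle inequalities to the radial comparison $\rho\ge\tilde\rho$ along the side opposite $p$ via the generalized first variation formula (this is precisely the role of Lemma \ref{Nlem} in the present paper), the barrier-sense Riccati/Hessian inequality $\rho''\le (f'/f)(\rho)\,(1-(\rho')^2)$ with equality in the model, the hinge form for the vertex angle at $p$ resting on the strict monotonicity of $\phi\mapsto d(\tilde x,\tilde y)$ along a parallel (exactly Lemma \ref{KL2.1}, proved as in \cite{SST}), and a Toponogov-style thin-triangle subdivision to globalize past the cut locus. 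Your sign conventions in the first-variation step are correct, and your warning that a naive maximum principle for $\rho-\tilde\rho$ fails is well taken.

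There are nevertheless genuine gaps. The most concrete is a circularity in your existence step: the range of $\phi\mapsto d(\tilde x,\tilde y)$ is $[\,|a-b|,\ d(\tilde\mu_0(a),\tilde\mu_\pi(b))\,]$, and to place $c=d(x,y)$ in it you invoke ``the triangle inequality together with the perimeter bound of Lemma \ref{lemd1}.'' But Lemma \ref{lemd1} bounds perimeters of triangles in $\wt M$, not in $M$; the bound you actually need, $c\le d(\tilde\mu_0(a),\tilde\mu_\pi(b))$, concerns the triangle in $M$ and is itself the degenerate hinge comparison with angle $\pi$ at $p$ --- in this paper it appears only as Lemma \ref{lem3.0}, which is deduced \emph{from} Theorem \ref{th2.2}. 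So the comparison triangle's existence cannot be split off as a preliminary intermediate-value step; it has to be established inside the same induction (subdivision of the side $xy$, comparison for thin subtriangles with apex $p$, gluing and straightening of the model subtriangles) that yields the angle inequalities, which is how the proof in \cite{ST} is organized. Second, that globalization --- the actual heart of the theorem, where the hypothesis on the cut loci of $\wt M$ does its work through the Lipschitz and monotonicity properties of the angle function $\theta(a,b,c)$ --- is only named in your final paragraph, not executed. As written, your text is an accurate roadmap to the Sinclair--Tanaka proof rather than a proof: the curvature input is indeed routine, but every step you defer (thin-triangle comparison across cut points of $p$, the straightening of broken model geodesics, and the non-circular derivation of the perimeter bound) is precisely the content of the cited theorem.
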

Moreover, we also need the following theorem which will be proved along methods developed by Kondo and Tanaka \cite{KT} in Section 3.
\begin{theorem}\label{th2.7}
If $\angle(xpy)=\angle(\tilde x\tilde p\tilde y)$ holds under the same assumptions as in Theorem \ref{th2.2},  then equality  holds for the other two inequalities in \eqref{eq2.2}.
\end{theorem}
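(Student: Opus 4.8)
The plan is to convert the three angle comparisons of Theorem \ref{th2.2} into information about a single radial function along the side $xy$, and then to extract the rigidity from the differential inequality that encodes the curvature bound. First I would fix a minimal geodesic $\gamma\colon[0,\ell]\to M$ from $x$ to $y$ (with $\ell=d(x,y)$) and put $r(t):=d(p,\gamma(t))$; let $\tilde\gamma$ be the side $\tilde x\tilde y$ of the comparison triangle furnished by Theorem \ref{th2.2} and $\tilde r(t):=d(\tilde p,\tilde\gamma(t))$. The first variation formula at the endpoints gives $\cos\angle(pxy)=-r'(0)$ and $\cos\angle(pyx)=r'(\ell)$, together with the identical relations $\cos\angle(\tilde p\tilde x\tilde y)=-\tilde r'(0)$ and $\cos\angle(\tilde p\tilde y\tilde x)=\tilde r'(\ell)$ in $\wt M$. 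Since $r(0)=\tilde r(0)=d(p,x)$ and $r(\ell)=\tilde r(\ell)=d(p,y)$, it suffices to prove $r'(0)=\tilde r'(0)$ and $r'(\ell)=\tilde r'(\ell)$.

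The mechanism is the development of $\gamma$ into $\wt M$ used by Kondo and Tanaka. The radial curvature lower bound yields, through the Hessian comparison for the distance function to $p$, the differential inequality $r''\le (f'(r)/f(r))(1-r'^2)$, whereas the geodesic equation for the warped metric $dr^2+f(r)^2\,d\theta^2$ forces the equality $\tilde r''=(f'(\tilde r)/f(\tilde r))(1-\tilde r'^2)$ for $\tilde\gamma$. Developing $\gamma$ as the unit-speed curve $\bar\gamma(t)=(r(t),\bar\theta(t))$ with $\bar\theta'=\sqrt{1-r'^2}/f(r)$, the construction underlying the proof of Theorem \ref{th2.2} produces the chain
\[ \angle(xpy)\ \ge\ \int_0^\ell \frac{\sqrt{1-r'(t)^2}}{f(r(t))}\,dt\ \ge\ \angle(\tilde x\tilde p\tilde y), \]
whose first inequality expresses the index comparison for the geodesics issuing from $p$ (the angle actually subtended at $p$ is at least the developed angle) and whose second is the integrated form of the differential inequality for $r$ against the model equation for $\tilde r$.

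Under the hypothesis $\angle(xpy)=\angle(\tilde x\tilde p\tilde y)$ the whole chain collapses to equalities; in particular the second inequality becomes an equality. I would then invoke the strong maximum principle for $u:=r-\tilde r$, which vanishes at both endpoints: equality in the integrated comparison saturates the differential inequality for $r$, so $r$ satisfies the same second order equation as $\tilde r$ with the same boundary data, whence $u\equiv 0$ and $r\equiv\tilde r$ on $[0,\ell]$. Matching the one-sided derivatives at $t=0$ and $t=\ell$ then gives $r'(0)=\tilde r'(0)$ and $r'(\ell)=\tilde r'(\ell)$, that is, $\angle(pxy)=\angle(\tilde p\tilde x\tilde y)$ and $\angle(pyx)=\angle(\tilde p\tilde y\tilde x)$, which is the assertion.

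The hard part will be the regularity of $r$: the interior of $\gamma$ may meet the cut locus of $p$, where $r$ is only Lipschitz and the pointwise differential inequality can fail. I would handle this exactly as in \cite{KT}, replacing $r$ by smooth upper support functions so that $r''\le (f'(r)/f(r))(1-r'^2)$ holds in the barrier sense, which is precisely what the generalized first variation formula is designed to provide, and which is enough to run both the integrated comparison and the maximum principle. One must also check that the development $\bar\gamma$ stays within a single geodesic polar chart about $\tilde p$ and sweeps its angle monotonically; this is guaranteed by the standing hypothesis that the cut locus of every non-polar point of $\wt M$ lies in the opposite half-meridian, the same hypothesis that underlies Theorem \ref{th2.2}.
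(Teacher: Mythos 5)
Your proposal has a genuine gap: the second inequality in your central chain is false --- it holds in the \emph{reverse} direction. The developed curve $\bar\gamma(t)=(r(t),\bar\theta(t))$ is a unit speed curve in $\wt M$ of length $\ell=d(\tilde x,\tilde y)$ joining a point at radius $d(p,x)$ to a point at radius $d(p,y)$, so the Riemannian distance in $\wt M$ between its endpoints is \emph{at most} $\ell$; since by Lemma \ref{KL2.1} the distance from $\tilde x$ to points on the parallel $r=d(p,y)$ is strictly increasing in the angular coordinate, this forces $\int_0^\ell\sqrt{1-r'^2}/f(r)\,dt\leq\angle(\tilde x\tilde p\tilde y)$, not $\geq$. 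A concrete check: let $M$ be the round sphere of curvature $4$ and $\wt M$ the unit sphere ($f=\sin$), with $x,y$ on the equator of $M$ at distance $\pi/4$ from the pole $p$ and $\angle(xpy)=\pi/2$. Then $\ell=\pi/4$ and $r\equiv\pi/4$ along $\gamma$, so the developed angle is $(\pi/4)/\sin(\pi/4)\approx1.1107$, while the comparison triangle with all sides $\pi/4$ has apex angle $\arccos(\sqrt2-1)\approx1.1437$. Consequently the hypothesis $\angle(xpy)=\angle(\tilde x\tilde p\tilde y)$ does not pinch your chain (the two correct inequalities $\angle(xpy)\geq\Delta\bar\theta$ and $\Delta\bar\theta\leq\angle(\tilde x\tilde p\tilde y)$ point the same way), and the collapse argument never starts. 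The maximum-principle step is also unjustified on its own terms: equality of two integrals of \emph{different} functionals of $r$ and $\tilde r$ does not localize to pointwise saturation of the differential inequality, and $u=r-\tilde r$ is not a supersolution of a fixed linear elliptic equation with the stated regularity ($r$ is only Lipschitz at cut points of $p$, and the ODE is fully nonlinear in $r'$).

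The paper's actual route avoids developing along the side $xy$ altogether. It shrinks the hinge at $p$ homothetically, setting $\mathbf x(t),\mathbf y(t)$ with $d(p,\mathbf x(t))=at$, $d(p,\mathbf y(t))=ct$, and studies the comparison apex angle $\tilde\theta(t)=\angle(\tilde x(t)\tilde p\tilde y(t))$. Using the generalized first variation formula (Lemma \ref{Nlem}), the model monotonicity and quantitative Lipschitz estimates (Lemmas \ref{KL2.1}, \ref{lem4.2KT}, \ref{lem4.3KT}), it proves $\tilde\theta$ is locally Lipschitz and non-increasing on $(0,1]$ (Lemma \ref{KL2}). Since $\lim_{t\downarrow0}\tilde\theta(t)=\angle(xpy)$ and $\tilde\theta(1)=\angle(\tilde x\tilde p\tilde y)$, the hypothesis forces $\tilde\theta$ to be constant, and then a first-variation identity comparing $\frac{d}{dt}d(\mathbf x(t),\mathbf y(t))$ with $\frac{d}{dt}d(\tilde x(t),\tilde y(t))$ (Lemma \ref{KL3}) converts constancy into equality of both base angles. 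If you want to salvage a side-based argument, you would need a quantity monotone \emph{between} $\angle(xpy)$ and $\angle(\tilde x\tilde p\tilde y)$; the developed angle is not such a quantity, whereas the shrinking-hinge angle $\tilde\theta(t)$ is precisely that interpolant.
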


\section{Proof of Theorem \ref{th2.7} }

We need the following generalized first variation formula \eqref{eq:lem2.5-0}
in order to prove Theorem \ref{th2.7}.
This lemma is very closely related  to Lemma 2.1 in \cite{IT} and a certain technique  used in the proof of Lemma 2.1 in \cite{IT}  is also used for proving the following lemma.
\begin{lemma}\label{Nlem}
Let $\mu,\eta$ denote geodesics defined on a common interval $(a,b)$ in a complete Riemannian manifold $(M,g).$ Then, for each $t_0\in(a,b)\setminus\psi^{-1}(0),$ where we set  $\psi(t):=d(\mu(t),\eta(t)),$
$\psi_+'(t_0):=\lim_{t\downarrow t_0}{(\psi(t)-\psi(t_0))}/{(t-t_0)}$ exists and

\begin{equation}\label{eq:lem2.5-0}
\psi_+'(t_0)=-g(\mu'(t_0),\gamma_+'(0)) +g( \eta'(t_0), \gamma_+'(\psi(t_0))),
\end{equation}
where $\gamma_+:[0,\psi(t_0)]\to M$ denotes any limit geodesic segment of the sequence
of (unit speed) minimal geodesic segments joining $\mu(t_i)$ to $\eta(t_i)$ as $t_i$ converges  to $t_0$, where $\{t_i \}_i$ denotes a decreasing sequence.
In particular, if there exists a unique minimal geodesic segment $\gamma$ joining $\mu(t_0)$ to $\eta(t_0)$, then $\psi'(t_0)$ exists and 
$\psi'(t_0)=-g(\mu'(t_0),\gamma'(0))+g(\eta'(t_0),\gamma'(\psi(t_0))).$
\end{lemma}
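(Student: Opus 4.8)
The plan is to bracket the one-sided difference quotient of $\psi$ between an upper estimate coming from a fixed comparison curve and a lower estimate coming from the minimal connecting geodesics themselves, and then to squeeze. Throughout write $\ell:=\psi(t_0)>0$ and, for a unit-speed minimal geodesic segment $\gamma:[0,\ell]\to M$ from $\mu(t_0)$ to $\eta(t_0)$, set $R(\gamma):=-g(\mu'(t_0),\gamma'(0))+g(\eta'(t_0),\gamma'(\ell))$; the asserted value is $R(\gamma_+)$. The essential tool is the classical first variation of arc length together with the observation that, because $\mu$ and $\eta$ are \emph{geodesics}, one has $\mu(t)=\exp_{\mu(t_0)}((t-t_0)\mu'(t_0))$ and $\eta(t)=\exp_{\eta(t_0)}((t-t_0)\eta'(t_0))$; this lets a variation built by exponentiation land its endpoints exactly on $\mu(t)$ and $\eta(t)$.

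First I would prove the upper estimate. Given any minimal geodesic $\gamma$ from $\mu(t_0)$ to $\eta(t_0)$, choose a smooth vector field $W$ along $\gamma$ with $W(0)=\mu'(t_0)$ and $W(\ell)=\eta'(t_0)$ and set $c(s,t):=\exp_{\gamma(s)}((t-t_0)W(s))$. Then $c(\cdot,t_0)=\gamma$, while $c(0,t)=\mu(t)$ and $c(\ell,t)=\eta(t)$ by the geodesic identity above, so $\psi(t)\le L(c(\cdot,t))$ with equality at $t_0$. Since $\gamma$ is a geodesic the interior term of the first variation vanishes, and differentiating the length at $t_0$ yields $\tfrac{d}{dt}L(c(\cdot,t))|_{t_0}=R(\gamma)$. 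Dividing by $t-t_0>0$ and letting $t\downarrow t_0$ gives $\limsup_{t\downarrow t_0}(\psi(t)-\psi(t_0))/(t-t_0)\le R(\gamma)$ for every such $\gamma$.

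Next the lower estimate. For a decreasing sequence $t_j\downarrow t_0$ pick unit-speed minimal geodesics $\gamma_j$ from $\mu(t_j)$ to $\eta(t_j)$; by completeness and the Arzel\`a--Ascoli theorem a subsequence converges (smoothly, as solutions of the geodesic equation with converging endpoint data) to a minimal geodesic $\gamma_*$ from $\mu(t_0)$ to $\eta(t_0)$. Mirroring the construction, base an exponential variation $c_j$ at $\gamma_j$ (so $c_j(\cdot,t_j)=\gamma_j$) with endpoints on $\mu$ and $\eta$; then $\psi(t_0)\le L(c_j(\cdot,t_0))$, and first variation at $t_j$ gives $L(c_j(\cdot,t_0))=\psi(t_j)+R(\gamma_j)(t_0-t_j)+o(t_j-t_0)$. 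Rearranging, $(\psi(t_j)-\psi(t_0))/(t_j-t_0)\ge R(\gamma_j)-o(1)$, and since $R(\gamma_j)\to R(\gamma_*)$ we obtain $\liminf_j\ge R(\gamma_*)$.

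Choosing $t_j$ to realise $\liminf_{t\downarrow t_0}$ and combining with the upper estimate applied to the resulting $\gamma_*$ squeezes $R(\gamma_*)\le\liminf\le\limsup\le R(\gamma_*)$, so $\psi_+'(t_0)$ exists and equals $R(\gamma_*)$; feeding the given sequence $t_i$ (whose limit geodesic is $\gamma_+$) into both estimates then forces the common value to be $R(\gamma_+)$. The statement for the left derivative is symmetric, and when the minimal geodesic is unique every limit geodesic from either side coincides with it, yielding the two-sided $\psi'(t_0)$. I expect the main obstacle to be the uniformity of the remainder in the lower estimate: the Taylor remainder of $L(c_j(\cdot,\cdot))$ at distance $t_j-t_0$ from its base must be $O((t_j-t_0)^2)$ \emph{uniformly in $j$}, which I would secure by choosing the fields $W_j$ so that $c_j\to c_*$ in $C^2$ on a fixed neighborhood — this is exactly the point where the technique of Lemma 2.1 in \cite{IT} enters.
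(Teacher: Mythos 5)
Your proposal is correct in outline and shares the paper's overall bracketing skeleton (an upper bound on the forward difference quotient valid for \emph{every} minimal segment $\alpha$ from $\mu(t_0)$ to $\eta(t_0)$, a lower bound along a liminf-realizing sequence whose minimizers converge to a limit geodesic, a squeeze, and symmetry plus uniqueness for the two-sided case), but the implementation of both estimates is genuinely different. The paper never builds a variation of curves: for the upper bound it inserts the \emph{midpoint} $q_0$ of $\alpha$, writes $\psi(t)\leq d(\mu(t),q_0)+d(q_0,\eta(t))$, and applies the first variation formula to the two distance functions $d(q_0,\cdot)$ — the midpoint is chosen precisely so that the minimal segments from $q_0$ to $\mu(t_0)$ and to $\eta(t_0)$ are unique (interior subsegments of a minimizer), making those functions differentiable there. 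For the lower bound it uses the moving midpoints $q_i$ of the minimizers $\gamma_i$ joining $\mu(t_0+h_i)$ to $\eta(t_0+h_i)$, splits $\psi(t_0+h_i)-\psi(t_0)$ by the triangle inequality, and invokes the technique of Lemma 2.1 of Itoh--Tanaka \cite{IT} to evaluate the limits of the resulting difference quotients, whose delicacy comes from the base points $q_i$ moving with $i$. Your route instead exponentiates a connecting field $W_j$ along $\gamma_j$ so that the variation's endpoints land exactly on $\mu$ and $\eta$ (using that $\mu,\eta$ are geodesics), and trades the paper's moving-midpoint limit computation for a Taylor expansion of $t\mapsto L(c_j(\cdot,t))$ with a remainder that must be $O((t_j-t_0)^2)$ \emph{uniformly in $j$}. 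You correctly identify this uniformity as the crux, and your fix is sound: since $\gamma_j\to\gamma_*$ with $\ell_j\to\ell>0$, fields such as the parallel-transport interpolation give $|W_j|,|\nabla_s W_j|$ uniformly bounded on a fixed compact set, and a uniform lower speed bound $|\partial_s c_j|\geq 1/2$ for $|t-t_j|$ small keeps the length functional twice differentiable with uniformly bounded second $t$-derivative. What your approach buys is self-containedness — it replaces the appeal to \cite{IT} by an explicit second-order estimate — at the cost of needing curvature/second-variation control that the paper's first-order midpoint trick entirely avoids; what the paper's approach buys is that only first variation formulas for distance functions are ever differentiated, with the uniformity difficulty outsourced to the cited technique of \cite{IT}.
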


\begin{proof}
Let $\alpha : [0,\psi(t_0)]\to M$ denote any unit speed minimal geodesic segment
joining $\mu(t_0)$ to $\eta(t_0).$
Let $t\in(t_0,b)$ denote any  number and $q_0$ the midpoint of $\alpha.$ 
By the triangle inequality, we get
$\psi(t)\leq d(\mu(t),q_0)+d(q_0,\eta(t)).$
Therefore, by applying  the first variation formula to both functions 
$d(q_0,\mu(t))$ and $d(q_0,\eta(t))$,
we obtain
\begin{equation}\label{eq:lem2.5-1}
\limsup_{t\downarrow t_0}\frac{\psi(t)-\psi(t_0)}{t-t_0}\leq -g(\mu'(t_0),\alpha'(0))+g( \eta'(t_0),\alpha'(\psi(t_0)) ). 
\end{equation}
Choose a sequence $\{h_i\}_i$ of positive  numbers convergent to zero such that
there exists  a limit minimal geodesic segment $\gamma_+$ joining $\mu(t_0)$ to $\eta(t_0)$ of the sequence of  unit speed minimal geodesic segments $\gamma_i$ joining $\mu(t_0+h_i)$ to $\eta(t_0+h_i)$
as $h_i$ goes to zero and that 
\begin{equation}\label{eq:lem2.5-2}
\lim_{i\to \infty} \frac{\psi(t_0+h_i)-\psi(t_0)}{h_i}=\liminf_{t\downarrow t_0}\frac{\psi(t)-\psi(t_0)}{t-t_0}.
\end{equation}

For each $h_i,$ let $q_i$ denote the midpoint of the minimal geodesic segment $\gamma_i$ joining $\mu(t_0+h_i)$ and $\eta(t_0+h_i).$
Then, by the triangle inequality, we obtain
\begin{equation}\label{eq:lem2.5-3}
\psi(t_0)\leq d(\mu(t_0),q_i)+d(q_i,\eta(t_0))
\end{equation}
and
\begin{equation}\label{eq:lem2.5-4}
\psi(t_0+h_i)-\psi(t_0)\geq d(\mu(t_0+h_i),q_i)-d(\mu(t_0),q_i)+d(\eta(t_0+h_i),q_i)-d(\eta(t_0),q_i).
\end{equation}
By imitating the proof of Lemma 2.1 in \cite{IT}, we have
$$\lim_{i\to\infty}\frac{d(\mu(t_0+h_i),q_i)-d(\mu(t_0),q_i)}{h_i}=-g(\mu'(t_0),\gamma_+'(0))$$
and
$$\lim_{i\to\infty}\frac{d(\eta(t_0+h_i),q_i)-d(\eta(t_0),q_i)}{h_i}=g(\eta'(t_0),\gamma_+'(\psi(t_0))).$$
Hence, by \eqref{eq:lem2.5-2} and  \eqref{eq:lem2.5-4},
we get
\begin{equation}\label{eq:lem2.5-5}
\liminf_{t\downarrow t_0}\frac{\psi(t)-\psi(t_0)}{t-t_0}\geq -g(\mu'(t_0),\gamma_+'(0))+g(\eta'(t_0),\gamma_+'(\psi(t_0)).
\end{equation}
Since the equation \eqref{eq:lem2.5-1} holds for any minimal geodesic segment $\alpha$ joining $\mu(t_0)$ to $\eta(t_0),$ we get 
\eqref{eq:lem2.5-0}, by combining \eqref{eq:lem2.5-1} and \eqref{eq:lem2.5-5}.

By reversing the parameters of $\mu$ and $\eta$ in the equation \eqref{eq:lem2.5-0},
we have
\begin{equation}\label{eq:lem2.5-6}
\psi_-'(t_0):=\lim_{t\uparrow t_0}\frac{\psi(t)-\psi(t_0)}{t-t_0}= -g(\mu'(t_0),\gamma_-'(0))+g(\eta'(t_0),\gamma_-'(\psi(t_0)) ),
\end{equation}
where $\gamma_-$ denotes a limit minimal geodesic segment joining $\mu(t_0)$ to $\eta(t_0)$ of the sequence of  unit speed minimal geodesic segments joining $\mu(t_0+h_i)$ to $\eta(t_0+h_i)$
as a sequence $\{h_i\}$  of negative numbers goes to zero.
If there exists a unique minimal geodesic segment $\gamma$ joining $\mu(t_0)$ to $\eta(t_0)$, then it is clear from \eqref{eq:lem2.5-0} and \eqref{eq:lem2.5-6} that
$\psi_+'(t_0)=\psi_-'(t_0),$ and hence $\psi'(t_0)$ exists.
$\qedd$
\end{proof}

The following  lemma is one of  key lemmas for proving  Theorem \ref{th2.7}.
The proof is  similar to that of  Lemma 7.3.2 in \cite{SST}.

\begin{lemma}\label{KL2.1}
Let $(\wt M,dr^2+f(r)^2d\theta^2)$ denote a two-sphere of revolution with a pair of poles $\tilde p$ and $\tilde q.$
Let  $\tilde x\in r^{-1}(0,d(\tilde p,\tilde q))\cap\theta^{-1}(0)$ be a point on $\widetilde M$. 
If two points $\tilde q_1$ and $\tilde q_2$ lying on a common parallel $r=c\in(0,d(\tilde p, \tilde q) )$
satisfy
$$0\leq\theta(\tilde q_1)<\theta(\tilde q_2)\leq\pi,$$
then, 
$$d(\tilde x,\tilde q_1)<d(\tilde x,\tilde q_2)$$
holds.
\end{lemma}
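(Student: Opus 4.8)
The plan is to reduce the statement to the strict monotonicity of a single function and to analyze it via the first variation formula together with the Clairaut relation. Write $\eta(\theta):=(c,\theta)$ for the point on the parallel $r=c$ with angular coordinate $\theta$, and set $\Phi(\theta):=d(\tilde x,\eta(\theta))$. Since the two points $\tilde q_1,\tilde q_2$ in the statement are exactly $\eta(\theta(\tilde q_1))$ and $\eta(\theta(\tilde q_2))$, it suffices to prove that $\Phi$ is strictly increasing on $[0,\pi]$. I first record some structural facts: $\Phi$ is $f(c)$-Lipschitz (hence absolutely continuous), it is even and $2\pi$-periodic because $(r,\theta)\mapsto(r,-\theta)$ is an isometry of $\wt M$ fixing $\tilde x$, and $\Phi(0)=|r(\tilde x)-c|$ is its global minimum, since $r=d(\tilde p,\cdot)$ is $1$-Lipschitz and therefore $\Phi(\theta)\ge |r(\tilde x)-c|$ for every $\theta$.

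Next I compute the derivative. For $\theta\in(0,\pi)$ let $\sigma:[0,\ell]\to\wt M$ be a unit-speed minimal geodesic from $\tilde x$ to $\eta(\theta)$, written in coordinates as $\sigma(s)=(r(s),\vartheta(s))$. The classical first variation formula gives, at every point where $\Phi$ is differentiable, $\Phi'(\theta)=\tilde g(\partial_\theta,\sigma'(\ell))=f(c)^2\dot\vartheta(\ell)$, which is precisely the Clairaut invariant $\nu:=f(r)^2\dot\vartheta$ of $\sigma$ (a constant along $\sigma$). Thus the whole statement reduces to showing that $\nu>0$ for every minimal geodesic from $\tilde x$ to $\eta(\theta)$ with $\theta\in(0,\pi)$: granting this, $\Phi'>0$ at each of its (almost every) points of differentiability, and absolute continuity forces $\Phi(\theta_2)-\Phi(\theta_1)=\int_{\theta_1}^{\theta_2}\Phi'\,d\theta>0$, with the endpoints $\theta_1=0$ and $\theta_2=\pi$ included by continuity. (Equivalently, one may invoke the one-sided first variation to see that $\Phi'_+>0$ on $[0,\pi)$.)

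Establishing $\nu>0$ is the heart of the matter, and the step I expect to be the main obstacle. Since $\nu\ne 0$ forces $f(r)\ge|\nu|>0$ along $\sigma$ by the Clairaut relation, a geodesic with $\nu\ne0$ avoids both poles, so $\dot\vartheta=\nu/f(r)^2$ has constant sign and $\vartheta$ is strictly monotone. If $\nu=0$ then $\sigma$ is a meridian arc, all of whose points satisfy $\theta\in\{0,\pi\}$, and this cannot reach $\eta(\theta)$ with $\theta\in(0,\pi)$; hence $\nu\ne0$. It remains to exclude $\nu<0$. If $\nu<0$ then $\vartheta$ decreases from $0$ to $\theta-2\pi$, so its total sweep is $2\pi-\theta>\pi$ and $\vartheta$ attains the value $-\pi\equiv\pi$ exactly once, at an interior point $w=\sigma(s_0)$ lying on the half-meridian $\theta=\pi$. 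The full meridian $\mathcal M=\{\theta=0\}\cup\{\theta=\pi\}$ is the fixed-point set of the isometry $R:(r,\theta)\mapsto(r,-\theta)$, and both $\tilde x$ and $w$ lie on $\mathcal M$. Reflecting the initial arc $\sigma|_{[0,s_0]}$ (which runs through the lune $\{-\pi<\theta<0\}$) by $R$ produces an arc of the same length from $\tilde x$ to $w$ in the opposite lune; concatenating it with the terminal arc $\sigma|_{[s_0,\ell]}$ yields a broken geodesic $Q$ from $\tilde x$ to $\eta(\theta)$ with $L(Q)=\ell$. At $w$ the incoming velocity of the reflected arc and the outgoing velocity of $\sigma|_{[s_0,\ell]}$ differ exactly by the sign of their $\partial_\theta$-components (one has $\dot\vartheta>0$, the other $\dot\vartheta<0$), and since $\dot\vartheta\ne0$ there this is a genuine corner; hence $Q$ can be strictly shortened, giving $d(\tilde x,\eta(\theta))<L(Q)=\ell$, a contradiction. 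Therefore $\nu>0$, which completes the reduction and the proof.
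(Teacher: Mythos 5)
Your proof is correct, but note first that the paper does not actually prove this lemma: it only remarks that ``the proof is similar to that of Lemma 7.3.2 in \cite{SST},'' so the comparison is with that classical argument rather than with an in-paper proof. The heart of your argument coincides with the classical one: the sign analysis of the Clairaut constant $\nu=f(r)^2\dot\vartheta$ along a minimizer $\sigma$ from $\tilde x$, with $\nu=0$ excluded because a geodesic with $\nu=0$ is a meridian and stays in $\theta^{-1}\{0,\pi\}\cup\{\tilde p,\tilde q\}$, and $\nu<0$ excluded by reflecting the initial subarc of $\sigma$ in the full meridian (the fixed-point set of the isometry $(r,\theta)\mapsto(r,-\theta)$), producing a curve of length $d(\tilde x,\eta(\theta))$ with a genuine corner, which can be strictly shortened --- a contradiction. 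Where you genuinely diverge is the final step: the SST-style proof converts the monotone increase of $\vartheta$ from $0$ to $\theta(\tilde q_2)$ along the minimizer to $\tilde q_2$ directly into a strictly shorter comparison curve to $\tilde q_1$ (for instance by compressing the angular coordinate of $\sigma$ by the factor $\theta(\tilde q_1)/\theta(\tilde q_2)$, which fixes $\tilde x$ and strictly decreases length because $\dot\vartheta\neq0$), whereas you differentiate $\Phi(\theta)=d(\tilde x,\tilde\sigma_c(\theta))$ almost everywhere via the first variation formula, identify $\Phi'=\nu>0$ at every point of differentiability, and integrate using the Lipschitz property. Your packaging invokes Rademacher's theorem and the first-variation identity at differentiability points, but in exchange it yields the quantitative information $\Phi'=\nu$ a.e., which is precisely the kind of derivative bound that Lemma \ref{lem4.2KT} later records; the comparison-curve proof is more elementary and avoids measure theory. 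Two small points. First, in the case $\nu<0$ you assert the total sweep is exactly $2\pi-\theta$; a priori a minimizer could wind, with $\vartheta(\ell)=\theta-2\pi k$ for some $k\geq1$, so you should take $s_0$ to be the \emph{first} interior parameter with $\vartheta(s_0)=-\pi$ (it exists since the total decrease is at least $2\pi-\theta>\pi$, and it is interior since neither endpoint has $\vartheta=-\pi$); the reflection argument is otherwise unchanged. Second, your proof correctly uses nothing beyond rotational symmetry, the reflection isometry, and the Clairaut relation --- in particular not the cut-locus hypothesis of Section 4 --- which is consistent with the lemma's placement in Section 3, where no such hypothesis is in force.
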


For each real number $\alpha,$
let $\tilde\mu_\alpha:[0,\infty)\to \wt M$ denote the unit speed geodesic 
emanating from the pole $\tilde p$ with $\theta\circ \tilde\mu_\alpha=\alpha$ on 
$(0,d(\tilde p,\tilde q)).$ 
For each $c\in(0,d(\tilde p,\tilde q)),$
$\tilde\sigma_c :[0,2\pi]\to \wt M$ denotes the parallel $r=c,$ i.e., $\tilde\sigma_c(\theta):=\tilde\mu_\theta(c).$

It follows from Lemma \ref{KL2.1} that
for any $a,c\in(0,d(\tilde p,\tilde q)), $ and $\theta_0\in(0,\pi),$
\begin{equation}\label{eq:2.1N}
\liminf_{\theta\downarrow \theta_0}\frac{d(\tilde \mu_0(a),\tilde\sigma_c(\theta))-d(\tilde\mu_0(a),\tilde \sigma_c(\theta_0))}{\theta-\theta_0}\geq 0
\end{equation}

The next lemma states  a stronger conclusion than the equation \eqref{eq:2.1N}.
The proof is the same as that of Lemma 4.2 in \cite{KT}.

\begin{lemma}\label{lem4.2KT}
For any $a_0,c_0\in(0,d(\tilde p,\tilde q))$ and $\theta_0\in(0,\pi),$
there exist constant numbers
$\epsilon_1\in(0,\pi/2)$ and $\delta>0$ 
such that
\begin{equation}\label{eq:2.2N}
|d(\tilde\mu_0(a),\tilde\sigma_c(\theta_2))-d(\tilde\mu_0(a),\tilde\sigma_c(\theta_1))|\geq
(f(c)\sin\epsilon_1)|\theta_2-\theta_1|
\end{equation}
holds for all $a\in(a_0-\delta,a_0+\delta),c\in(c_0-\delta_,c_0+\delta)$, and $\theta_1,\theta_2\in(\theta_0-\delta,\theta_0+\delta).$
\end{lemma}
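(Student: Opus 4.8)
The plan is to compute the one-sided $\theta$-derivative of the distance function via the first variation formula, to recognize it as a Clairaut constant, and then to bound the associated angle away from its degenerate value by a compactness argument. Throughout, write $F_{a,c}(\theta):=d(\tilde\mu_0(a),\tilde\sigma_c(\theta))$; by Lemma \ref{KL2.1} this function is strictly increasing on $(0,\pi)$, which already yields \eqref{eq:2.1N}.

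First I would fix a unit speed minimal geodesic segment $\gamma:[0,\ell]\to\wt M$ from $\tilde\mu_0(a)$ to $\tilde\sigma_c(\theta)$, where $\ell=F_{a,c}(\theta)$, and apply the first variation formula for the distance to the fixed point $\tilde\mu_0(a)$ (in the one-sided form, with the limit geodesics produced as in Lemma \ref{Nlem}). Since $\tilde\sigma_c'(\theta)=\partial/\partial\theta$ has $\tilde g$-length $f(c)$, writing $\phi\in[0,\pi]$ for the angle between $\gamma'(\ell)$ and the meridian direction $\partial/\partial r$ at $\tilde\sigma_c(\theta)$, the form of the metric $dr^2+f(r)^2d\theta^2$ gives
\[
\tilde g\bigl(\tilde\sigma_c'(\theta),\gamma'(\ell)\bigr)=f(c)\sin\phi,
\]
which is precisely the Clairaut constant of $\gamma$. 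Thus the right-hand $\theta$-derivative of $F_{a,c}$ equals $f(c)\sin\phi$ for the relevant limit minimal geodesic, and \eqref{eq:2.2N} will follow once I produce $\epsilon_1\in(0,\pi/2)$ and $\delta>0$ so that every minimal geodesic joining $\tilde\mu_0(a)$ to $\tilde\sigma_c(\theta)$, for $(a,c,\theta)$ in the box $(a_0-\delta,a_0+\delta)\times(c_0-\delta,c_0+\delta)\times(\theta_0-\delta,\theta_0+\delta)$, satisfies $\sin\phi\ge\sin\epsilon_1$. Indeed, granting this, the continuous function $F_{a,c}$ has lower right derivative at least $f(c)\sin\epsilon_1$ at every such $\theta$ (the limit geodesic is again minimal, hence obeys the bound), so $F_{a,c}(\theta_2)-F_{a,c}(\theta_1)\ge f(c)\sin\epsilon_1\,(\theta_2-\theta_1)$ for $\theta_1<\theta_2$, which is \eqref{eq:2.2N} after taking absolute values.

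The heart of the argument, and the step I expect to be the main obstacle, is the uniform angle bound, which I would establish by contradiction and compactness. If it failed, there would be sequences $(a_n,c_n,\theta_n)\to(a_0,c_0,\theta_0)$ and unit speed minimal geodesics $\gamma_n$ from $\tilde\mu_0(a_n)$ to $\tilde\sigma_{c_n}(\theta_n)$ whose Clairaut angle satisfies $\sin\phi_n\to0$, i.e.\ whose Clairaut constant $f(c_n)\sin\phi_n$ tends to $0$. Since the $\gamma_n$ are minimal geodesics between points of a fixed compact set, their lengths are bounded, and a subsequence converges to a unit speed minimal geodesic $\gamma_\infty$ from $\tilde\mu_0(a_0)$ to $\tilde\sigma_{c_0}(\theta_0)$. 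As the Clairaut constant depends continuously on the geodesic, $\gamma_\infty$ has Clairaut constant $0$ and is therefore a meridian. However, the meridians through $\tilde\sigma_{c_0}(\theta_0)$ form the set $\{\theta=\theta_0\}\cup\{\theta=\theta_0+\pi\}$, and because $\theta_0\in(0,\pi)$ while $\tilde\mu_0(a_0)$ is the non-polar point with coordinates $(r,\theta)=(a_0,0)$, none of these meridians passes through $\tilde\mu_0(a_0)$. This contradiction yields the desired $\epsilon_1$ and $\delta$.

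I emphasize that the only genuinely delicate point is the non-degeneration in the compactness step: one must exclude that the minimal geodesics from $\tilde\mu_0(a)$ to the parallel collapse onto a meridian in the limit. This rests entirely on $\theta_0$ being an interior value of $(0,\pi)$, so that the moving target $\tilde\sigma_c(\theta)$ stays off the two meridians through the fixed source $\tilde\mu_0(a)$; the remaining ingredients (the first variation formula, the Clairaut relation, and the elementary fact that a continuous function with lower right derivative bounded below by $m$ has increments at least $m$ times the length of the interval) are routine.
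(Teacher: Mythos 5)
Your proposal is correct and takes essentially the same route as the paper, which proves Lemma \ref{lem4.2KT} by simply invoking Lemma 4.2 of \cite{KT}: that proof is precisely your combination of the one-sided first variation formula, the Clairaut relation $\tilde g(\tilde\sigma_c'(\theta),\gamma'(\ell))=f(c)\sin\phi$, and the compactness argument ruling out a meridional limit geodesic because $\theta_0\in(0,\pi)$ keeps $\tilde\sigma_{c_0}(\theta_0)$ off the meridian through $\tilde\mu_0(a_0)$. The one small point worth noting, which you already handle, is that the sign of the $\theta$-component of $\gamma'(\ell)$ is fixed by the monotonicity from Lemma \ref{KL2.1}, so the Dini-derivative lower bound $f(c)\sin\epsilon_1$ integrates to \eqref{eq:2.2N}.
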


From Lemma \ref{KL2.1},
it follows that for any positive numbers 
$a,b,$ and $c$ with 
$(a,b,c)\in T,$ where 
$$T:=\{(a,b,c)\in \mathbb{R}^3| a,b,c>0, |a-c|<b<d(\tilde\mu_0(a),\tilde\mu_\pi(c))\},$$
there exists a geodesic triangle $\triangle(\tilde p\tilde y\tilde z)$ in the two-sphere of revolution 
$\wt M$
such that $d(\tilde p,\tilde y)=a, d(\tilde y,\tilde z)=b,$ and  $d(\tilde z,\tilde p)=c.$
Let $\theta(a,b,c)$ denote the angle $\angle(\tilde y \tilde p\tilde z)$ of the geodesic triangle $\triangle(\tilde p\tilde y\tilde z).$
Notice that the angle $\theta(a,b,c)$ is uniquely determined even if there exist more than one minimal geodesic segment joining $\tilde y$ to $\tilde z.$

Since the proof of Lemma 4.3 in \cite{KT} is still valid for the two-sphere of revolution $\wt M,$ we get the following 
lemma. 

\begin{lemma}\label{lem4.3KT}
The function $\theta(a,b,c)$ is locally Lipschitz on the set $T.$
\end{lemma}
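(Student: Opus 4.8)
The plan is to realize $\theta(a,b,c)$ as the solution of an implicit equation and then to deduce the Lipschitz property from the two one-sided estimates already available: the lower bound of Lemma \ref{lem4.2KT} and the trivial upper Lipschitz bound coming from the fact that the Riemannian distance is $1$-Lipschitz. First I would normalize the configuration. Since the geodesic triangle $\triangle(\tilde p\tilde y\tilde z)$ realizing $(a,b,c)$ is determined up to an isometry of $\wt M,$ I may assume that $\tilde y=\tilde\mu_0(a)$ lies on the meridian $\theta=0$ and that $\tilde z=\tilde\sigma_c(\theta(a,b,c))$ with $\theta(a,b,c)\in[0,\pi].$ Writing
$$F(a,c,\theta):=d(\tilde\mu_0(a),\tilde\sigma_c(\theta)),$$
the angle $\theta(a,b,c)$ is then characterized as the unique solution $\theta\in[0,\pi]$ of the equation $F(a,c,\theta)=b.$

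Next I would check that this characterization makes $\theta(a,b,c)$ continuous on $T.$ By Lemma \ref{KL2.1}, for fixed $a,c\in(0,d(\tilde p,\tilde q))$ the map $\theta\mapsto F(a,c,\theta)$ is strictly increasing on $[0,\pi],$ with $F(a,c,0)=|a-c|$ and $F(a,c,\pi)=d(\tilde\mu_0(a),\tilde\mu_\pi(c));$ the defining inequality $|a-c|<b<d(\tilde\mu_0(a),\tilde\mu_\pi(c))$ of $T$ thus guarantees a unique $\theta(a,b,c)\in(0,\pi).$ Since $F$ is jointly continuous, being a composition of the continuous distance with the smooth parametrizations $\tilde\mu_0$ and $\tilde\sigma_c,$ and strictly monotone in $\theta,$ the solution $\theta(a,b,c)$ depends continuously on $(a,b,c).$

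Now fix a point $(a_0,b_0,c_0)\in T$ and put $\theta_0:=\theta(a_0,b_0,c_0)\in(0,\pi).$ By the continuity just established I may choose a neighborhood $U$ of $(a_0,b_0,c_0)$ in $T$ so small that all values $\theta(a,b,c)$ with $(a,b,c)\in U,$ together with the corresponding $a$ and $c,$ lie in the windows $(\theta_0-\delta,\theta_0+\delta),$ $(a_0-\delta,a_0+\delta)$ and $(c_0-\delta,c_0+\delta)$ furnished by Lemma \ref{lem4.2KT} for $a_0,c_0,\theta_0.$ For two triples $(a,b,c),(a',b',c')\in U$ with angles $\theta:=\theta(a,b,c)$ and $\theta':=\theta(a',b',c'),$ Lemma \ref{lem4.2KT} gives
$$(f(c)\sin\epsilon_1)\,|\theta-\theta'|\leq |F(a,c,\theta)-F(a,c,\theta')|.$$
Using $F(a,c,\theta)=b,$ $F(a',c',\theta')=b'$ and inserting $\pm F(a,c,\theta'),$ the right-hand side is at most $|b-b'|+|F(a,c,\theta')-F(a',c',\theta')|.$ The last term I would estimate by the triangle inequality together with the $1$-Lipschitz property of $d(\cdot,\cdot)$ and the facts that $a\mapsto\tilde\mu_0(a)$ moves at unit speed and that $c\mapsto\tilde\sigma_c(\theta')$ displaces its image point along the meridian $\theta=\theta'$ at unit speed, independently of $\theta';$ this yields $|F(a,c,\theta')-F(a',c',\theta')|\leq|a-a'|+|c-c'|.$ Combining and using that $f(c)$ is bounded below by a positive constant on the relevant $c$-interval, as $f>0$ on $(0,d(\tilde p,\tilde q)),$ produces a constant $L$ with $|\theta-\theta'|\leq L(|a-a'|+|b-b'|+|c-c'|)$ on $U,$ which is exactly the local Lipschitz estimate sought.

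The genuinely delicate analytic point, namely that the non-smoothness of $F$ in $\theta$ caused by possible non-uniqueness of minimizing geodesics does not destroy a lower bound on the rate of change, has already been absorbed into Lemma \ref{lem4.2KT}. The main obstacle that remains is therefore organizational rather than deep: confirming the continuity of $\theta(a,b,c)$ carefully enough to confine its values to the window in which Lemma \ref{lem4.2KT} applies, and checking the uniform upper Lipschitz control of $F$ in the variables $a$ and $c.$
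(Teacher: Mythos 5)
Your proof is correct and follows essentially the same route as the paper, which gives no argument of its own but defers to Lemma 4.3 of \cite{KT}: there, too, the angle is treated as the inverse of the strictly monotone function $\theta\mapsto d(\tilde\mu_0(a),\tilde\sigma_c(\theta))$ (monotone by Lemma \ref{KL2.1}), and the Lipschitz estimate comes from combining the lower bound of Lemma \ref{lem4.2KT} with the triangle-inequality bound $|a-a'|+|c-c'|$ in the remaining variables. Your write-up correctly supplies the continuity step needed to confine the angles to the window where Lemma \ref{lem4.2KT} applies, which is exactly the point the citation leaves implicit.
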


Let $\triangle(pxy)$ be a geodesic triangle in $M,$ and maps $\mathbf x$ and $\mathbf y$ from the interval $[0,1]$ into $M$
denote the edges joining $p$ to $x$ and  joining $ p$ to $y$, respectively, i.e., the minimal geodesic segment 
joining  $p={\mathbf x}(0)={\mathbf y}(0)$ to  $x={\mathbf x}(1)$, and joining $p$ to $y={\mathbf y}(1)$, respectively. Here we assume that $\mathbf x$ and $\mathbf y$ are parametrized proportionally to arc-length and that $\angle(xpy)\in(0,\pi).$
Furthermore we assume that for each $t\in(0,1]$ there exists a unique  geodesic triangle 
$\widetilde\triangle(p{\mathbf x}(t){\mathbf y}(t)):=\triangle(\tilde p\tilde x(t)\tilde y(t))$ in $\widetilde M$
corresponding to $\triangle(p{\mathbf x}(t){\mathbf y}(t))$
such that
\begin{equation}\label{eq:4.12KT}
d(\tilde p, \tilde x(t))=d(p,{\mathbf x}(t)), \quad d(\tilde p,\tilde y(t))=d(p,{\mathbf y}(t)
), \quad d(\tilde x(t),\tilde y(t))=d({\mathbf x}(t),{\mathbf y}(t))
\end{equation}
and that
\begin{equation}\label{eq:4.13KT}
\angle(p{\mathbf x}(t){\mathbf y}(t))\geq\angle(\tilde p\tilde x(t)\tilde y(t)), \quad \angle(p{\mathbf y}(t){\mathbf x}(t))\geq\angle(p\tilde y(t)\tilde x(t)).
\end{equation}

From Lemma \ref{lem4.3KT}, the function
$\tilde\theta(t):=\angle(\tilde x(t)\tilde p\tilde y(t))$
is locally Lipschitz on $(0,1].$
Moreover, we get
\begin{lemma}\label{KL2}
The function $\tilde\theta(t)$ is locally Lipschitz  and non-increasing on $(0,1].$

\end{lemma}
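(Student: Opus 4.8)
The plan is to prove the two assertions of Lemma~\ref{KL2} separately, namely that $\tilde\theta(t)$ is locally Lipschitz and that it is non-increasing on $(0,1]$. The local Lipschitz property is essentially already available: by \eqref{eq:4.12KT} we may write $\tilde\theta(t)=\theta(d(p,\mathbf{x}(t)),d(\mathbf{x}(t),\mathbf{y}(t)),d(p,\mathbf{y}(t)))$, so $\tilde\theta$ is the composition of the locally Lipschitz function $\theta$ from Lemma~\ref{lem4.3KT} with the three distance functions along the edges. Since $\mathbf{x}$ and $\mathbf{y}$ are parametrized proportionally to arc-length, the maps $t\mapsto d(p,\mathbf{x}(t))$ and $t\mapsto d(p,\mathbf{y}(t))$ are linear, and $t\mapsto d(\mathbf{x}(t),\mathbf{y}(t))$ is Lipschitz by the triangle inequality; hence the composition is locally Lipschitz on $(0,1]$, where the image stays inside the open set $T$. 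This also tells us, by Rademacher's theorem, that $\tilde\theta$ is differentiable almost everywhere, so it suffices to show $\tilde\theta'(t)\le 0$ at each point of differentiability, or more carefully that the upper Dini derivative is non-positive everywhere.

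Next I would set up the monotonicity argument by comparing the rate of opening of the angle in $M$ with the corresponding rate in $\wt M$. Fix $t_0\in(0,1)$ and consider the distance $\psi(t):=d(\mathbf{x}(t),\mathbf{y}(t))$ in $M$. Applying the generalized first variation formula of Lemma~\ref{Nlem} to the geodesics $\mathbf{x}$ and $\mathbf{y}$ (reparametrized to unit speed), I obtain an upper bound for $\psi_+'(t_0)$ in terms of the angles $\angle(p\mathbf{x}(t_0)\mathbf{y}(t_0))$ and $\angle(p\mathbf{y}(t_0)\mathbf{x}(t_0))$ that the edges make with the connecting minimal geodesic. Simultaneously, in $\wt M$ the comparison triangle has the same three side lengths by \eqref{eq:4.12KT}, and I would compute the rate of change of the side $d(\tilde x(t),\tilde y(t))$ using the model analogue of the first variation formula, which involves the angles $\angle(\tilde p\tilde x(t_0)\tilde y(t_0))$ and $\angle(\tilde p\tilde y(t_0)\tilde x(t_0))$. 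The inequalities \eqref{eq:4.13KT} between the base angles in $M$ and $\wt M$ are exactly what is needed to conclude that the opening rate of $\tilde\theta$ cannot exceed a quantity governed by these comparisons, forcing $\tilde\theta$ to decrease as $t$ increases.

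The main technical obstacle will be the step that converts the side-length derivative in the model surface into the angle derivative $\tilde\theta'(t_0)$ and then chains everything together into the inequality $\tilde\theta_+'(t_0)\le 0$. Because the three sides $d(\tilde p,\tilde x(t))$, $d(\tilde p,\tilde y(t))$, $d(\tilde x(t),\tilde y(t))$ all vary with $t$, I need to differentiate the relation defining $\theta(a,b,c)$ with respect to $t$ using the first variation formula in $\wt M$ and a careful bookkeeping of how $a$, $b$, $c$ move; here the quantitative lower bound of Lemma~\ref{lem4.2KT}, which guarantees a uniform nondegeneracy $f(c)\sin\epsilon_1>0$ for the derivative of $d(\tilde\mu_0(a),\tilde\sigma_c(\theta))$ in $\theta$, is precisely what keeps the implicit relation between the side $b$ and the angle controllable and prevents the denominator from collapsing. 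Combining this nondegeneracy with \eqref{eq:4.13KT} and the first variation estimate in $M$ yields, after elementary algebra of the type appearing in the proof of Lemma~7.3.2 in \cite{SST}, the sign $\tilde\theta_+'(t_0)\le 0$; since $\tilde\theta$ is locally Lipschitz this upper-Dini-derivative bound at every point implies that $\tilde\theta$ is non-increasing on $(0,1]$, completing the proof.
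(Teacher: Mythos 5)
Your proposal is correct and follows essentially the same route as the paper's proof: local Lipschitzness via Lemma \ref{lem4.3KT} composed with the (proportional-to-arclength) side functions, reduction to $\tilde\theta'(t_0)\le 0$ at almost every $t_0$, the generalized first variation formula of Lemma \ref{Nlem} together with the base-angle inequalities \eqref{eq:4.13KT} to get $\phi'(t_0)\le\tilde\psi'(t_0)$, and the quantitative nondegeneracy of Lemma \ref{lem4.2KT} to convert the angle increment into a distance increment. One implementation detail you should adopt from the paper: the first variation formula cannot be applied directly to $t\mapsto d(\tilde x(t),\tilde y(t))$ since $\tilde y(t)$ is not a geodesic (its angular coordinate is the varying $\tilde\theta(t)$), so the paper instead freezes the angle at $\tilde\theta(t_0)$ by introducing the genuine geodesic $\tilde\eta_0$ and the auxiliary function $\tilde\psi(t):=d(\tilde\mu_0(at),\tilde\eta_0(ct))$, then derives the contradiction $\tilde\psi'(t_0)\ge\phi'(t_0)\ge\tilde\psi'(t_0)+(f(ct_0)\sin\epsilon_1)\tilde\theta'(t_0)$ from the estimate $\phi(t)\ge\tilde\psi(t)+(f(ct)\sin\epsilon_1)(\tilde\theta(t)-\tilde\theta(t_0))$ under the assumption $\tilde\theta'(t_0)>0$ --- exactly the bookkeeping your third paragraph gestures at via implicit differentiation.
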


\begin{proof}

Putting $a:=d(p,x), b:=d(x,y)$, and $c:=d(p,y),$ 
we get $at=d(p,{\mathbf x}(t)), ct=d(p,{\mathbf y}(t)),$ and $\tilde\theta(t)=\theta(at,d({\mathbf x}(t),{\mathbf y}(t)),ct).$
Since $d({\mathbf x }(t),{\mathbf y}(t))$ is a Lipschitz function, it is trivial from Lemma \ref{lem4.3KT} that $\tilde\theta(t)$ is locally Lipschitz on $(0,1].$
Next, we prove that $\tilde\theta(t)$ is non-increasing. 
It follows from Lemma 7.29 in \cite{WZ} that for almost all $t\in(0,1],$ $\tilde\theta'(t)$ exists and for any $t_0\in(0,1],$
$\tilde\theta(t_0)-\lim_{t\downarrow 0}\tilde\theta(t)=\int_0^{\theta_0}\tilde\theta'(t)dt$
holds. Hence, it is sufficient to prove that  $\tilde\theta'(t)\leq 0$ for almost all $t.$
Choose any $t_0\in(0,1), $ at which $\tilde\theta$ is differentiable.
Since the function $\phi(t):=d({\mathbf x}(t),{\mathbf y}(t))$ is also Lipschitz,
we may assume that  both functions $\tilde\theta(t)$ and $\phi(t)$ are differentiable at $t_0.$
By \eqref{eq:4.13KT}, 
we get
\begin{equation}\label{eq:4.14KT}
\angle (p{\mathbf x}(t_0){\mathbf y}(t_0))\geq\angle(\tilde p\tilde x(t_0)\tilde y(t_0)),\quad\angle (p{\mathbf y}(t_0){\mathbf x}(t_0))\geq\angle(\tilde p\tilde y(t_0)\tilde x(t_0)).
\end{equation}

Let $\tilde\mu_0,\tilde\eta_0 :[0,\infty)\to\wt M$ denote unit speed geodesics emanating
from $\tilde p$ such that the angle made by $\tilde\mu_0$ and $\tilde\eta_0$ at $\tilde p$ equals $\tilde\theta(t_0).$
Thus, we may assume that
$\tilde x(t)=\tilde\mu_0(at)$  for all $t\in(0,1]$ and $\tilde y(t_0)=\tilde\eta_0(ct_0).$
Then, we define a  Lipschitz function $\tilde\psi:[0,1]\to R$ by
$$\tilde\psi(t):=d(\tilde\mu_0(at),\tilde\eta_0(ct)).$$
Since there exists a unique  minimal geodesic segment joining $\tilde\mu_0(at_0)=\tilde x(t_0)$ to $\tilde\eta_0(ct_0)=\tilde y(t_0),$
it follows from Lemma \ref{Nlem} that
\begin{equation}
\tilde\psi'(t_0)=a\cos\angle(\tilde p\tilde x(t_0)\tilde y(t_0))+c\cos\angle(\tilde p\tilde y(t_0)\tilde x(t_0))
\end{equation}
and
\begin{equation}
\phi'(t_0)=a\cos\angle( p{\mathbf x}(t_0){\mathbf y}(t_0))+c\cos\angle(p{\mathbf y} (t_0){\mathbf x}(t_0))
\end{equation}
and hence by \eqref{eq:4.14KT}, we obtain
\begin{equation}\label{eq:2.9}
\phi'(t_0)\leq \tilde\psi'(t_0).
\end{equation}
Supposing that $\tilde\theta'(t_0)>0,$ we will get a contradiction.
Then, there exists a positive $\delta_0$ such that
$\tilde\theta(t)>\tilde\theta(t_0)$ for all $t\in(t_0,t_0+\delta_0).$
From Lemmas \ref{KL2.1} and \ref{lem4.2KT}, there exist $\epsilon_1>0$ and $\delta_1\in(0,\delta_0]$ such that
\begin{equation}\label{eq:4.24KT}
\phi(t)=d(\tilde\mu_0(at),\tilde y(t))\geq\tilde\psi(t)+(f(ct)\sin\epsilon_1)(\tilde\theta(t)-\tilde\theta(t_0))
\end{equation}
for all $t\in(t_0,t_0+\delta_1).$
Since $\phi(t_0)=\tilde\psi(t_0),$  we get
\begin{equation*}
\tilde\psi'(t_0)\geq\phi'(t_0)\geq\tilde\psi'(t_0)+(f(ct_0)\sin\epsilon_1)\tilde\theta'(t_0),
\end{equation*}
by \eqref{eq:2.9} and \eqref{eq:4.24KT},
and hence $\tilde\theta'(t_0)\leq 0.$ This is a contradiction.
Therefore, $\tilde\theta(t)$ is non-increasing on $(0,1].$
$\qedd$
\end{proof}


\begin{lemma}\label{KL3}
If $\tilde\theta(t)$ is constant on $(0,1],$ then 
for  all $t\in(0,1),$
\begin{equation*}
\angle(p{\mathbf x}(t){\mathbf y}(t))=\angle(\tilde p\tilde x(t)\tilde y(t))\quad
\text{and}\quad
\angle(p{\mathbf y}(t){\mathbf x}(t))=\angle(\tilde p\tilde y(t)\tilde x(t))
\end{equation*}
hold.
\end{lemma}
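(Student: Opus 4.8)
The plan is to deduce, from constancy of $\tilde\theta,$ that the two Lipschitz functions $\phi(t):=d(\mathbf x(t),\mathbf y(t))$ and $\tilde\psi(t):=d(\tilde\mu_0(at),\tilde\eta_0(ct))$ introduced in the proof of Lemma \ref{KL2} coincide on the \emph{whole} interval $(0,1],$ and then to read off the two angle equalities from a variational sandwich at each interior parameter. The first step is the crux. Fix geodesics $\tilde\mu_0,\tilde\eta_0$ emanating from $\tilde p$ and making the constant angle $\tilde\theta$ at $\tilde p.$ Because a geodesic triangle in $\wt M$ with one vertex at the pole $\tilde p$ is determined, up to a rotation about the axis of revolution, by its two side lengths issuing from $\tilde p$ and the angle between the corresponding meridians, the comparison triangle $\widetilde\triangle(p\mathbf x(t)\mathbf y(t))$---which has sides $at,ct$ issuing from $\tilde p$ and included angle $\tilde\theta(t)=\tilde\theta$---may be realized with $\tilde x(t)=\tilde\mu_0(at)$ and $\tilde y(t)=\tilde\eta_0(ct)$ for every $t\in(0,1].$ Consequently $\phi(t)=d(\tilde x(t),\tilde y(t))=\tilde\psi(t)$ identically on $(0,1],$ which upgrades the pointwise identity $\phi(t_0)=\tilde\psi(t_0)$ used in Lemma \ref{KL2} to a global one.

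Next I would fix $t_0\in(0,1)$ and note that $\tilde x(t_0)$ lies on the meridian $\theta=0$ while $\tilde y(t_0)$ lies on $\theta=\tilde\theta\in(0,\pi)$ (the bound $\tilde\theta<\pi$ coming from $\tilde\theta\leq\angle(xpy)<\pi$); hence $\tilde x(t_0)$ avoids the half-meridian opposite $\tilde y(t_0),$ and so, by hypothesis, avoids its cut locus. There is thus a unique minimal geodesic joining $\tilde x(t_0)$ to $\tilde y(t_0),$ and Lemma \ref{Nlem} yields that $\tilde\psi$ is differentiable at $t_0$ with
$$\tilde\psi'(t_0)=a\cos\angle(\tilde p\tilde x(t_0)\tilde y(t_0))+c\cos\angle(\tilde p\tilde y(t_0)\tilde x(t_0)).$$
By the global identity from the first step, $\phi$ is then differentiable at $t_0$ as well, with $\phi'(t_0)=\tilde\psi'(t_0).$

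Finally I would apply the one-sided estimate \eqref{eq:lem2.5-1} from the proof of Lemma \ref{Nlem} in $M,$ choosing $\alpha$ to be the third edge of $\triangle(p\mathbf x(t_0)\mathbf y(t_0));$ a routine computation of the inner products $g(\mathbf x'(t_0),\alpha'(0))$ and $g(\mathbf y'(t_0),\alpha'(\phi(t_0)))$ turns its right-hand side into $a\cos\angle(p\mathbf x(t_0)\mathbf y(t_0))+c\cos\angle(p\mathbf y(t_0)\mathbf x(t_0)),$ giving
$$\phi'(t_0)\leq a\cos\angle(p\mathbf x(t_0)\mathbf y(t_0))+c\cos\angle(p\mathbf y(t_0)\mathbf x(t_0)).$$
Invoking \eqref{eq:4.13KT}, the monotonicity of $\cos$ on $[0,\pi],$ and $\tilde\psi'(t_0)=\phi'(t_0)$ then produces the chain
$$\phi'(t_0)\leq a\cos\angle(p\mathbf x(t_0)\mathbf y(t_0))+c\cos\angle(p\mathbf y(t_0)\mathbf x(t_0))\leq a\cos\angle(\tilde p\tilde x(t_0)\tilde y(t_0))+c\cos\angle(\tilde p\tilde y(t_0)\tilde x(t_0))=\phi'(t_0),$$
so every inequality is an equality. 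Since $a,c>0$ and each summand on the left is termwise no greater than its counterpart on the right, equality of the sums forces equality termwise; injectivity of $\cos$ on $[0,\pi]$ finally yields $\angle(p\mathbf x(t_0)\mathbf y(t_0))=\angle(\tilde p\tilde x(t_0)\tilde y(t_0))$ and $\angle(p\mathbf y(t_0)\mathbf x(t_0))=\angle(\tilde p\tilde y(t_0)\tilde x(t_0))$ for every $t_0\in(0,1).$ The one genuine obstacle is the global identity $\phi\equiv\tilde\psi$ of the first paragraph; once it is in hand, the remainder is the sandwich just described.
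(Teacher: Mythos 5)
Your proposal is correct and follows essentially the same route as the paper's proof: realize the comparison triangles along two fixed geodesics $\tilde\mu_0,\tilde\eta_0$ emanating from the pole (so that $\phi\equiv\tilde\psi$ by rotational symmetry and the constancy of $\tilde\theta$), differentiate via Lemma \ref{Nlem} using uniqueness of the minimal geodesic in $\wt M$, and force termwise equality of the cosines by \eqref{eq:4.13KT}. Your only deviation---invoking the one-sided estimate \eqref{eq:lem2.5-1} for $\phi$ in $M$ instead of the two-sided derivative formula the paper asserts---is a minor refinement that sidesteps possible non-uniqueness of minimal geodesics in $M$, not a different argument.
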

\begin{proof}
Suppose that $\tilde\theta(t)$ is constant on $(0,1].$
Let $\tilde\mu,\tilde\eta:[0,\infty)\to \wt M$ denote unit speed geodesics emanating from $\tilde p$ such that the angle made by $\tilde\mu$ and $\tilde\eta$ at $\tilde p$ equals $\angle(xpy).$
We may assume that for each $t\in(0,1]$, we get $\tilde x(t)=\tilde\mu(at)$ and $\tilde y(t)=\tilde\eta(ct),$ since $\tilde\theta(t)=\lim_{t\downarrow 0}\tilde\theta(t)=\angle(xpy).$
Hence, $d({\mathbf x}(t),{\mathbf y}(t))=d(\tilde\mu(at),\tilde\eta(ct))$
for all $t\in(0,1].$
Since there exists a unique minimal geodesic segment joining $\tilde x(t)$ to $\tilde y(t),$
it follows from Lemma \ref{Nlem} that
$d(\tilde x(t),\tilde y(t))=d(\tilde\mu(at),\tilde\eta(ct))$ is differentiable on $(0,1),$
and that 
\begin{equation}
\frac{d}{dt}d(\tilde x(t),\tilde y(t))=a\cos\angle(\tilde p\tilde x(t)\tilde y(t))+c\cos\angle(\tilde p\tilde y(t)\tilde x(t))
\end{equation}
and 
\begin{equation}
\frac{d}{dt}d({\mathbf x}(t),{\mathbf y}(t))=a\cos\angle(p{\mathbf x}(t){\mathbf y}(t))+c\cos\angle(p{\mathbf y}(t){\mathbf x}(t))
\end{equation}
hold on $(0,1).$
Therefore, by \eqref{eq:4.13KT},
$
\angle(p{\mathbf x}(t){\mathbf y}(t))=\angle(\tilde p\tilde x(t)\tilde y(t)), $
and  $\angle(p{\mathbf y}(t){\mathbf x}(t))=\angle(p\tilde y(t)\tilde x(t))$
hold on  $(0,1).$
$\qedd$
\end{proof}

\noindent
{\it Proof of Theorem \ref{th2.7} }\\ 
Since $\lim_{t\downarrow 0}\tilde\theta(t)=\angle(xpy)=\angle(\tilde x\tilde p\tilde y)=\tilde\theta(1),$ 
$\tilde\theta(t)$ is constant by Lemma \ref{KL2}. Hence, by Lemma \ref{KL3},
we get the conclusion.



\section{Proof of Main Theorem}
In this section we add an additional assumption to the two-sphere of revolution $\wt M $ with a pair of poles $\tilde p,$ and $\tilde q$:
{\it  The cut locus of any point $\tilde x$ on $\wt M$ distinct from both poles is a subset of the half meridian opposite to the point $\tilde x$, i.e., the cut locus of $\tilde x\in\theta^{-1}(0)$ is a subset of $\theta^{-1}(\pi),$ by choosing suitable geodesic polar  coordinates
$(r,\theta).$}

For example, the unit 2-sphere is a trivial example satisfying the property above, and an ellipsoid defined by
$$\frac{x^2+y^2}{a^2}+\frac{z^2}{b^2}=1,$$
where $a<b$ are positive constants, is a non-trivial example of 
a 2-sphere of revolution satisfying the property above (see \cite{ST}.)
   
From now on, $M$ denotes a complete connected $n$-dimensional Riemannian manifold which has radial sectional curvature at  a point $p\in M$ bounded from below by the radial curvature function of the two-sphere of revolution $\wt M.$ 
Furthermore, 
By scaling the Riemannian metrics of $M$ and $\wt M,$ we may assume that $d(\tilde p,\tilde q)=\pi.$



 The following lemma is an easy consequence of Theorem \ref{th2.2}, Lemma \ref{lemd1}, and Lemma  \ref{lemd2}.
\begin{lemma}\label{lem3.0}
The perimeter of any geodesic triangle 
$\triangle(pxy)$
of $M$ does not exceed $2\pi$.
Moreover, 
   the diameter, ${\diam} (M):=\max\{d(x,y)|x,y\in M\},$
of $M$ is at most $\pi$. 
\end{lemma}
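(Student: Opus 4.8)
The plan is to push both assertions from the model surface $\wt M$ to $M$ through the comparison Theorem \ref{th2.2}, thereby reproducing on $M$ the two-line derivation by which Lemma \ref{lemd2} was obtained from Lemma \ref{lemd1} on $\wt M$. For the perimeter bound, let $\triangle(pxy)$ be any geodesic triangle in $M$. Theorem \ref{th2.2} furnishes a geodesic triangle $\triangle(\tilde p\tilde x\tilde y)$ in $\wt M$ with the same three side lengths, by \eqref{eq2.1}. Since the perimeter is determined by the side lengths alone, the perimeter of $\triangle(pxy)$ equals that of $\triangle(\tilde p\tilde x\tilde y)$, which is at most $2\pi$ by Lemma \ref{lemd1}. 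In particular
\[
d(p,x)+d(x,y)+d(p,y)\le 2\pi .
\]

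For the diameter bound I would estimate $d(x,y)$ for every pair $x,y\in M$. When $x,y\neq p$ and $\triangle(pxy)$ is a genuine triangle, Theorem \ref{th2.2} gives a comparison triangle with $d(x,y)=d(\tilde x,\tilde y)$, whence $d(x,y)\le\diam(\wt M)=\pi$ by Lemma \ref{lemd2}. There remain the degenerate configurations, the crucial one being a vertex at the base point, say $x=p$, where no triangle $\triangle(pxy)$ exists. In that case I would choose an auxiliary point $z\neq p,y$ for which $\triangle(pzy)$ is a genuine triangle, and add the perimeter inequality $d(p,z)+d(z,y)+d(p,y)\le 2\pi$ to the triangle inequality $d(p,y)\le d(p,z)+d(z,y)$; the two combine to $2\,d(p,y)\le 2\pi$, i.e. $d(p,y)\le\pi$. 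Collinear degenerate pairs are disposed of by perturbing one vertex off the connecting geodesic and passing to the limit, using the continuity of $d$. Taking the maximum over all pairs gives $\diam(M)\le\pi$.

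The substantive point, and the one I expect to require the most care, is exactly this handling of the degenerate cases: Theorem \ref{th2.2} is stated for honest geodesic triangles $\triangle(pxy)$, so it gives nothing when a vertex coincides with $p$ or when the three points are collinear. The obstacle is mild, however: it is enough to exhibit a single auxiliary vertex $z$ lying off the minimal geodesics from $p$ to $y$, which is possible whenever $\dim M\ge 2$ (the case $\dim M=1$ being immediate), and then the perimeter bound together with the triangle inequality---the very mechanism of Lemma \ref{lemd2}---finishes the proof with no further appeal to the model surface.
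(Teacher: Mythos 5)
Your proof is correct and follows exactly the route the paper intends: it states Lemma \ref{lem3.0} as ``an easy consequence of Theorem \ref{th2.2}, Lemma \ref{lemd1}, and Lemma \ref{lemd2}'' with no written proof, and your transfer of side lengths via \eqref{eq2.1} followed by the perimeter-plus-triangle-inequality mechanism of Lemma \ref{lemd2} is precisely that argument. Your extra care with the degenerate configurations (a vertex at $p$, collinear points), handled by an auxiliary vertex and a limiting perturbation, fills in details the paper leaves implicit and is sound.
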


\begin{lemma}\label{lem3.1}
   If there exists a pair of points $x,y\in M\setminus\{ p\} $ with 
$d(x,y)=\pi,$ then
the perimeter of  the geodesic triangle $\triangle(pxy)$   equals $2\pi,$ 
and $\angle(xpy)=\pi,$
i.e., the broken geodesic consisting of two minimal geodesic segments joining $x$ to $p$ and joining $p$ to $y$ respectively is a minimal geodesic segment joining $x$ to $y$ which passes through $p.$
\end{lemma}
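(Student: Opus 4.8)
The plan is to transport the hypothesis $d(x,y)=\pi$ to a comparison triangle in $\wt M$ via Theorem \ref{th2.2}, exploit the rigidity of the triangle inequality on the model surface, and then pull the two conclusions back to $M$ through the distance equalities \eqref{eq2.1} and the angle inequalities \eqref{eq2.2}. First I would apply Theorem \ref{th2.2} to the geodesic triangle $\triangle(pxy)$, obtaining a comparison triangle $\triangle(\tilde p\tilde x\tilde y)$ in $\wt M$ satisfying \eqref{eq2.1}; in particular $d(\tilde x,\tilde y)=d(x,y)=\pi$. Since $x,y\neq p$ we also have $d(\tilde p,\tilde x)=d(p,x)>0$ and $d(\tilde p,\tilde y)=d(p,y)>0$.

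Next I would pin down the comparison triangle exactly. On the one hand, Lemma \ref{lemd1} gives $d(\tilde p,\tilde x)+d(\tilde x,\tilde y)+d(\tilde p,\tilde y)\leq 2\pi$, hence $d(\tilde p,\tilde x)+d(\tilde p,\tilde y)\leq\pi$; on the other hand the triangle inequality gives $\pi=d(\tilde x,\tilde y)\leq d(\tilde p,\tilde x)+d(\tilde p,\tilde y)$. Combining these forces $d(\tilde p,\tilde x)+d(\tilde p,\tilde y)=\pi=d(\tilde x,\tilde y)$, so in particular $d(\tilde p,\tilde x),d(\tilde p,\tilde y)\in(0,\pi)$, i.e.\ $\tilde x$ and $\tilde y$ differ from both poles. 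Transporting back through \eqref{eq2.1}, the perimeter of $\triangle(pxy)$ equals that of $\triangle(\tilde p\tilde x\tilde y)$, namely $d(\tilde p,\tilde x)+d(\tilde x,\tilde y)+d(\tilde p,\tilde y)=2\pi$, which is the first assertion.

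For the angle at $p$, the key point is that $d(\tilde p,\tilde x)+d(\tilde p,\tilde y)=d(\tilde x,\tilde y)$ is a rigidity statement in $\wt M$. Since the cut locus of $\tilde p$ consists of the single point $\tilde q$, and $\tilde x,\tilde y\neq\tilde q$, the minimal segments $\tilde x\tilde p$ and $\tilde p\tilde y$ are the unique meridian arcs to these points; their concatenation is a curve of length $d(\tilde x,\tilde y)$, hence is itself a minimal geodesic and is therefore smooth at $\tilde p$. Consequently $\angle(\tilde x\tilde p\tilde y)=\pi$. Invoking the third inequality in \eqref{eq2.2}, $\angle(xpy)\geq\angle(\tilde x\tilde p\tilde y)=\pi$, and since every angle is at most $\pi$ we conclude $\angle(xpy)=\pi$. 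Finally, $\angle(xpy)=\pi$ means the minimal segments $xp$ and $py$ meet smoothly at $p$, so their concatenation is a geodesic of length $d(p,x)+d(p,y)=\pi=d(x,y)$, hence a minimal geodesic joining $x$ to $y$ through $p$, as claimed.

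I expect the one genuinely delicate step to be the rigidity claim $\angle(\tilde x\tilde p\tilde y)=\pi$: one must ensure that the edges of the comparison triangle meeting at $\tilde p$ really are the unique minimal meridian arcs to $\tilde x$ and $\tilde y$ (guaranteed here only because $\tilde x,\tilde y$ avoid the cut point $\tilde q$), so that equality in the triangle inequality forces a genuinely flat angle rather than merely the existence of \emph{some} minimal geodesic through $\tilde p$.
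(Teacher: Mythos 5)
Your proof is correct, but it routes the angle conclusion through the model surface, whereas the paper stays entirely in $M$. The paper's proof simply invokes Lemma \ref{lem3.0} (the perimeter of any geodesic triangle $\triangle(pxy)$ in $M$ is at most $2\pi$) together with the triangle inequality in $M$ to get $d(p,x)+d(p,y)=\pi=d(x,y)$, and the conclusion $\angle(xpy)=\pi$ is then immediate: equality in the triangle inequality makes the concatenation of the two edges at $p$ a minimizing curve, which must be smooth at $p$. Your version unpacks Lemma \ref{lem3.0} into its ingredients (Theorem \ref{th2.2} plus Lemma \ref{lemd1}), performs the same equality computation on the comparison triangle --- which by \eqref{eq2.1} is the identical computation --- and then obtains $\angle(\tilde x\tilde p\tilde y)=\pi$ by rigidity in $\wt M$ and transfers it back with the third inequality of \eqref{eq2.2}. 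That transfer is valid (an angle cannot exceed $\pi$), but it is unnecessary machinery: since \eqref{eq2.1} gives $d(p,x)+d(p,y)=d(x,y)$ in $M$ itself, the same corner-smoothing rigidity you apply at $\tilde p$ applies verbatim at $p$, with no appeal to \eqref{eq2.2}. Likewise the uniqueness of the meridian arcs from $\tilde p$, which you flag as the delicate point, is not actually needed: the comparison triangle's two edges at $\tilde p$ are specific minimal segments, and their concatenation having length exactly $d(\tilde x,\tilde y)$ already forces smoothness at the junction, whatever other minimal segments may exist. What your detour buys is an explicit picture of the degenerate comparison triangle lying along a meridian of $\wt M$, which the paper only develops later (Lemmas \ref{lem.2-sphere} and \ref{lem3.2}); what the paper's route buys is brevity and independence of the angle comparison inequalities for this particular lemma.
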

\begin{proof}
Choose any points $x,y\in M\backslash\{p\}$ satisfying $d(x,y)=\pi$. 
We get a geodesic triangle $\triangle (pxy)$ of $M$. It follows from Lemma \ref{lem3.0} that
\begin{equation}\label{eq3.1}
    d(p,x)+d(p,y)+d(x,y)\leq 2\pi
\end{equation}
holds.
By the triangle inequality, we obtain,
\begin{equation}\label{eq3.2}
    d(p,x)+d(p,y)\geq d(x,y)=\pi.
\end{equation}
Thus, combining \eqref{eq3.1} and \eqref{eq3.2}, we get
\begin{equation}\label{eq3.3}
d(p,x)+d(p,y)=\pi=d(x,y).
\end{equation}
Hence the perimeter of $\triangle(pxy)$ is $2\pi$ and $\angle(xpy)=\pi$. 
$\qedd$
\end{proof}
\begin{lemma}\label{lem.2-sphere}
If there exists a pair of points $\tilde x,\tilde y\in \wt M\setminus\{\tilde p\}$ with
$d(\tilde x,\tilde y)=\pi={\diam}(\wt M),$
then $\tilde y$ is a unique cut point of $\tilde x,$ and for any $\tilde z\in\wt M\setminus\{\tilde x\} $ there exists a minimal geodesic segment joining $\tilde x$ to $\tilde y$ which passes through $\tilde z,$ and hence $d(\tilde x,\tilde z)+d(\tilde z,\tilde y)=d(\tilde x,\tilde y)=\pi.$
\end{lemma}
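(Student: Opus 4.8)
The plan is to reduce everything to the single statement $\mathrm{Cut}(\tilde x)=\{\tilde y\}$, since the covering property and the distance identity are immediate consequences of it. First I note that neither $\tilde x$ nor $\tilde y$ can be $\tilde q$: if, say, $\tilde x=\tilde q$, then $d(\tilde q,\tilde y)=\pi$ would force $\tilde y=\tilde p$, contradicting $\tilde y\neq\tilde p$. Hence both points have radii in $(0,\pi)$, and after choosing geodesic polar coordinates I may assume $\tilde x\in\theta^{-1}(0)$, so that the cut‑locus hypothesis reads $\mathrm{Cut}(\tilde x)\subseteq\theta^{-1}(\pi)$. Applying Lemma \ref{lemd1} to $\triangle(\tilde p\tilde x\tilde y)$ together with the triangle inequality $d(\tilde x,\tilde y)\le d(\tilde p,\tilde x)+d(\tilde p,\tilde y)$ gives $d(\tilde p,\tilde x)+d(\tilde p,\tilde y)=\pi=d(\tilde x,\tilde y)$. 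Thus $\tilde p$ lies on a minimal geodesic from $\tilde x$ to $\tilde y$, which, passing through the pole, must be a meridian. Writing $a:=d(\tilde p,\tilde x)$, this places the two points antipodally on one closed meridian of length $2\pi$, with $\tilde x=\tilde\mu_0(a)$ and $\tilde y=\tilde\mu_\pi(\pi-a)$; the two half‑meridians joining them are both minimal, so in particular $\tilde y\in\mathrm{Cut}(\tilde x)$.

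Next I would record two elementary distance bounds along the opposite half‑meridian. For $\tilde w=\tilde\mu_\pi(s)$ the broken‑meridian paths through $\tilde p$ and through $\tilde q$ are genuine geodesics, giving $d(\tilde x,\tilde\mu_\pi(s))\le\min(a+s,\,2\pi-a-s)$, a quantity that equals $\pi$ only at $s=\pi-a$ and is strictly smaller otherwise. Combined with Lemma \ref{KL2.1}, which shows that for any point off the opposite meridian the distance to $\tilde x$ is strictly less than the distance to the corresponding point on $\theta^{-1}(\pi)$, this yields that $\tilde y$ is the \emph{unique} point at distance $\pi$ from $\tilde x$.

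The hard part is upgrading this to $\mathrm{Cut}(\tilde x)=\{\tilde y\}$, and here is the mechanism I would use. Suppose some $\tilde w=\tilde\mu_\pi(s_0)$ with $s_0\neq\pi-a$ lies in $\mathrm{Cut}(\tilde x)$, say $s_0<\pi-a$ (the case $s_0>\pi-a$ is symmetric, using the arc through $\tilde q$). On the one hand $d(\tilde x,\tilde w)\le a+s_0$ by the bound above; on the other hand the triangle inequality gives $d(\tilde x,\tilde w)+d(\tilde w,\tilde y)\ge d(\tilde x,\tilde y)=\pi$, and since $d(\tilde w,\tilde y)=\pi-a-s_0$ along the meridian, this forces $d(\tilde x,\tilde w)\ge a+s_0$. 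Hence $d(\tilde x,\tilde w)=a+s_0$, so the meridian arc through $\tilde p$ is minimal up to $\tilde w$ and, being a subarc of the minimal meridian to $\tilde y$, remains minimal strictly beyond $\tilde w$; in particular $\tilde w$ is not a conjugate point along it and is not a cut point along it. Therefore $\tilde w\in\mathrm{Cut}(\tilde x)$ can only come from a \emph{second} minimal geodesic $\beta\neq$ meridian reaching $\tilde w$, also of length $a+s_0$. But then the concatenation of $\beta$ with the meridian arc $\tilde w\to\tilde y$ has length $(a+s_0)+(\pi-a-s_0)=\pi=d(\tilde x,\tilde y)$, so it is a length‑minimizer and hence a \emph{smooth} geodesic; consequently $\beta$ meets $\tilde w$ with the same unit tangent as the meridian, and two geodesics sharing a point and a tangent coincide. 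This contradicts $\beta\neq$ meridian. Ruling out both poles as the endpoints $s_0=0$ and $s_0=\pi$ of this range, I conclude $\mathrm{Cut}(\tilde x)=\{\tilde y\}$.

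Finally I would close the argument. With $\mathrm{Cut}(\tilde x)=\{\tilde y\}$, geodesic polar coordinates about $\tilde x$ cover $\wt M\setminus\{\tilde y\}$, and every unit‑speed geodesic issuing from $\tilde x$ is minimal up to its cut point, which is $\tilde y$, reached at distance $\pi$. As the direction varies these geodesics sweep out all of $\wt M$, so any $\tilde z\in\wt M\setminus\{\tilde x\}$ lies on one of them, i.e.\ on a minimal geodesic from $\tilde x$ to $\tilde y$; this gives $d(\tilde x,\tilde z)+d(\tilde z,\tilde y)=d(\tilde x,\tilde y)=\pi$ and completes the proof. I expect the smoothness/tangency step in the third paragraph to be the decisive point, as it is what converts the merely metric information ($\tilde y$ unique farthest point) into the rigid conclusion that no extra minimal geodesics, and hence no extra cut points, can occur.
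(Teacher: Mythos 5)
Your proposal is correct and takes essentially the same route as the paper: both arguments first use Lemma \ref{lemd1} and the triangle inequality (the proof of Lemma \ref{lem3.1}) to place $\tilde x$ and $\tilde y$ antipodally on a common meridian whose two arcs are both minimal, and then combine this with the hypothesis that the cut locus of $\tilde x$ lies in the opposite half meridian to conclude that this cut locus is exactly $\{\tilde y\}$, whence every geodesic from $\tilde x$ minimizes up to $\tilde y$ at distance $\pi$ and sweeps out all of $\wt M$. Your third paragraph merely proves in detail the standard fact the paper uses implicitly---that an interior point of a minimizing segment is never a cut point, so every point of the opposite half meridian other than $\tilde y$, being interior to one of the two minimal meridian arcs, is excluded---while your second paragraph (Lemma \ref{KL2.1} and uniqueness of the farthest point) is logically superfluous.
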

\begin{proof}
By making use of the same proof of Lemma \ref{lem3.1},
we may prove that  the points $\tilde x$ and $\tilde y$ lie on a common meridian $\mu,$ which is divided in two minimal geodesic segments joining $\tilde x$ to $\tilde y.$
Thus, the cut locus of $\tilde x$ consists of a single point $\tilde y,$ since the cut locus of $\tilde x$ is a subset of the half meridian opposite to the point $\tilde x.$
In particular, the minimal geodesic segment joining $\tilde x$ to any $\tilde z\in\wt M\setminus\{\tilde x\}$ has a geodesic extension which passes through $\tilde y,$ and hence $d(\tilde x,\tilde z)+d(\tilde z,\tilde y)=d(\tilde x,\tilde y).$
$\qedd$
\end{proof}

\begin{lemma}\label{lem3.2}
    Suppose that there exist $x,y\in M\setminus\{p\}$ satisfying $d(x,y)=\pi.$
Then for any point $z\in M\setminus\{p,x,y\},$
there exists a geodesic triangle $\wt\triangle(pxz)$ in the  two-sphere  of revolution corresponding to $\triangle(pxz)$
satisfying $\angle(xpz)=\angle(\tilde x\tilde p\tilde z)$ and \eqref{eq2.1} for $y=z,\tilde y=\tilde z.$
Moreover, there exists  a  minimal geodesic segment joining $x$ to $y$ which passes through $z.$
\end{lemma}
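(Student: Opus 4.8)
The plan is to exploit the rigidity already forced on the model side. Since $d(x,y)=\pi$, Lemma \ref{lem3.1} tells us that $\angle(xpy)=\pi$ and $d(p,x)+d(p,y)=\pi$, so the chosen minimal geodesics of $\triangle(pxy)$ from $p$ to $x$ and from $p$ to $y$ are opposite rays of a single geodesic through $p$. Writing $a:=d(p,x)$, $c:=d(p,y)$ (so $a,c\in(0,\pi)$ with $a+c=\pi$) and $\theta_0:=\angle(xpz)$, this collinearity gives $\angle(ypz)=\pi-\theta_0$ for the matching edge choices. On the model I would fix geodesic polar coordinates with $\tilde x=\tilde\mu_0(a)$; by Lemma \ref{lem.2-sphere} the point $\tilde y:=\tilde\mu_\pi(c)$ is then the unique cut point of $\tilde x$, the two lie on a common meridian, and \emph{every} $\tilde w\in\wt M$ satisfies $d(\tilde x,\tilde w)+d(\tilde w,\tilde y)=\pi$. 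Thus $\wt\triangle(\tilde p\tilde x\tilde y)$ already realizes $\triangle(pxy)$.

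Next I would apply Theorem \ref{th2.2} separately to $\triangle(pxz)$ and to $\triangle(pyz)$, obtaining comparison points $\tilde z$ and $\tilde w$, both on the parallel $r=d(p,z)=:b$. Using the rotational and reflective symmetries of $\wt M$ I may arrange $\theta(\tilde z),\theta(\tilde w)\in[0,\pi]$, so that $\angle(\tilde x\tilde p\tilde z)=\theta(\tilde z)$ and $\angle(\tilde y\tilde p\tilde w)=\pi-\theta(\tilde w)$. The angle inequalities of Theorem \ref{th2.2} then read $\theta(\tilde z)\le\theta_0$ and $\pi-\theta(\tilde w)\le\pi-\theta_0$, i.e. $\theta(\tilde z)\le\theta_0\le\theta(\tilde w)$, while the side conditions give $d(\tilde x,\tilde z)=d(x,z)$ and $d(\tilde y,\tilde w)=d(y,z)$. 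Applying Lemma \ref{lem.2-sphere} to $\tilde w$ converts the latter into $d(\tilde x,\tilde w)=\pi-d(y,z)$, so that both model distances from $\tilde x$ are now expressed through the two points $\tilde z,\tilde w$ on the single parallel $r=b$.

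The decisive step is the strict monotonicity of Lemma \ref{KL2.1}. If $\theta(\tilde z)<\theta(\tilde w)$, then $d(\tilde x,\tilde z)<d(\tilde x,\tilde w)$, i.e. $d(x,z)<\pi-d(y,z)$, contradicting the triangle inequality $d(x,z)+d(y,z)\ge d(x,y)=\pi$. Hence $\theta(\tilde z)\ge\theta(\tilde w)$, which together with $\theta(\tilde z)\le\theta_0\le\theta(\tilde w)$ forces $\theta(\tilde z)=\theta_0=\theta(\tilde w)$; in particular $\tilde z=\tilde w$ and $\angle(\tilde x\tilde p\tilde z)=\theta_0=\angle(xpz)$. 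Equality of the two points also gives $d(x,z)=d(\tilde x,\tilde z)=d(\tilde x,\tilde w)=\pi-d(y,z)$, i.e. $d(x,z)+d(z,y)=\pi=d(x,y)$. Thus $\wt\triangle(\tilde p\tilde x\tilde z)$ satisfies both \eqref{eq2.1} (with $y,\tilde y$ replaced by $z,\tilde z$) and $\angle(xpz)=\angle(\tilde x\tilde p\tilde z)$, while the concatenation of minimal geodesics $x\to z$ and $z\to y$ is a minimal geodesic from $x$ to $y$ through $z$, yielding the \emph{Moreover} assertion.

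The main obstacle is securing the reverse inequality $d(x,z)+d(y,z)\le\pi$: Theorem \ref{th2.2} only bounds the model angles from above, and it is precisely the strict monotonicity of distance along a parallel (Lemma \ref{KL2.1}), combined with the antipodal identity of Lemma \ref{lem.2-sphere}, that upgrades these one-sided bounds into exact equality and pins down $\tilde z=\tilde w$. One should also treat the boundary case $d(p,z)=\pi$ separately, where $\tilde z$ degenerates to the pole $\tilde q$: there the perimeter bound of Lemma \ref{lem3.0} forces $d(x,z)=\pi-a$ and $d(y,z)=\pi-c$, so $d(x,z)+d(y,z)=\pi$ and the comparison triangle is recovered by choosing the meridian of direction $\theta_0$, making that case immediate.
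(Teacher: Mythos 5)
Your overall route is the same as the paper's: apply Theorem \ref{th2.2} to the two sub-triangles $\triangle(pxz)$ and $\triangle(pyz)$, place both comparison vertices on the parallel $r=d(p,z)$ with $\tilde x,\tilde y$ on a common meridian, use $\angle(xpz)+\angle(zpy)=\pi$ (from Lemma \ref{lem3.1}) to trap $\theta_0$ between the two model angles, and then let the strict monotonicity of Lemma \ref{KL2.1} together with the antipodal identity of Lemma \ref{lem.2-sphere} and the triangle inequality $d(x,z)+d(z,y)\geq\pi$ force $\tilde z=\tilde w$ and equality of angles. The paper phrases the final pinch as a chain $\pi\leq d(\tilde x,\tilde z_1)+d(\tilde z_2,\tilde y)\leq d(\tilde x,\tilde z_2)+d(\tilde z_2,\tilde y)=\pi$ with equality in the monotonicity lemma, while you argue by contradiction from strictness; these are logically identical. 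Your separate treatment of the degenerate case $d(p,z)=\pi$ is correct and is in fact more careful than the paper, which passes over it silently.

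There is, however, one genuine gap at the start. You set $\tilde x=\tilde\mu_0(a)$, \emph{define} $\tilde y:=\tilde\mu_\pi(c)$, and then invoke Lemma \ref{lem.2-sphere} to conclude that $\tilde y$ is the unique cut point of $\tilde x$ and that $d(\tilde x,\tilde w)+d(\tilde w,\tilde y)=\pi$ for every $\tilde w$. But the hypothesis of Lemma \ref{lem.2-sphere} is $d(\tilde x,\tilde y)=\pi$, and this is \emph{not} automatic from $a+c=\pi$ with the two points on opposite half-meridians: on a prolate ellipsoid (an admissible model in this paper), two such non-pole points are in general at distance strictly less than $\pi$, since the route around the equator undercuts the meridian route. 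So as written, the identity $d(\tilde x,\tilde w)=\pi-d(\tilde w,\tilde y)$ — the crux of your contradiction step, where you convert the side condition $d(\tilde y,\tilde w)=d(y,z)$ into a distance from $\tilde x$ — is unsupported. The repair is exactly the paper's opening move, which you skipped: first apply Theorem \ref{th2.2} to the triangle $\triangle(pxy)$ itself, obtaining a model triangle with $d(\tilde x,\tilde y)=d(x,y)=\pi$, $d(\tilde p,\tilde x)=a$, $d(\tilde p,\tilde y)=c$; only then does Lemma \ref{lem.2-sphere} apply, showing that this comparison pair lies on a common meridian and, after a rotation, is precisely your configuration $\tilde x=\tilde\mu_0(a)$, $\tilde y=\tilde\mu_\pi(c)$. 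In other words, it is the hypothesis $d(x,y)=\pi$ in $M$, transported to $\wt M$ by the comparison theorem, that forces the model pair to be at distance $\pi$; the model geometry alone does not. With that one line inserted, your proof is complete and coincides in substance with the paper's.
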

\begin{proof}
By Theorem \ref{th2.2}, there exists a geodesic triangle $\wt\triangle(pxy)$ in the 2-sphere of revolution $\wt M$ corresponding to $\triangle(pxy)$ 
satisfying \eqref{eq2.1}.
Hence, by Lemma \ref{lem.2-sphere},
\begin{equation}\label{eq4.4}
d(\tilde x,\tilde y)=d(x,y)=\pi=d(\tilde p,\tilde x)+d(\tilde p,\tilde y).
\end{equation}
We may assume that 
$\theta(\tilde x)=0, $ and $\theta(\tilde y)=\pi,$ i.e., 
the three points $\tilde x, \tilde p$, and $\tilde y$ lie on 
the  meridian  $\tilde c:=\{\tilde p,\tilde q\}\cup\theta^{-1}\{0,\pi\}$ in this order. 
Choose any $z\in M\setminus\{p,x,y\}.$
By applying Theorem  \ref{th2.2} to the two  geodesic triangles $\triangle(pxz)$ 
and $\triangle(pzy)$, we get  geodesic triangles
$\triangle(\tilde p\tilde x\tilde {z_1})$ and $\triangle (\tilde p\tilde{z_2}\tilde y)$ respectively, 
satisfying 
\begin{equation}\label{eq3.4}  
d(x,z)=d(\tilde x,\tilde z_1), \quad d(z,y)=d(\tilde z_2,\tilde y), \quad d(p,z)=d(\tilde p,\tilde z_1)=d(\tilde p,\tilde z_2)
\end{equation}
and
\begin{equation}\label{eq3.5} 
    \alpha:=\angle(xpz)\geq\angle(\tilde x\tilde p\tilde {z_1})=:\tilde\alpha,
\quad
    \beta:=\angle(zpy)\geq\angle(\tilde{z_2}\tilde p\tilde y)=:\tilde\beta.
\end{equation}
Without loss of generality, we may assume  that the points $\tilde{z_i},i=1,2$ are in the  common hemisphere $\theta^{-1}(0,\pi)$ determined by $\tilde c.$
Since  $d(\tilde x,\tilde y)=\pi$ holds by \eqref{eq4.4}, it follows from Lemma \ref{lem.2-sphere}, from \eqref{eq3.4},
and from the triangle inequality that 
 \begin{equation}\label{eq3.7}
    \pi=d(\tilde x,\tilde{z_1})+d(\tilde{z_1},\tilde y)=d(\tilde x,\tilde{z_2})+d(\tilde{z_2},\tilde y),
\end{equation}
and 
\begin{equation}\label{Neq4.7}
\pi=d(x,y)\leq d(x,z)+d(z,y)=d(\tilde x,\tilde z_1)+d(\tilde z_2,\tilde y). 
\end{equation}
By Lemma \ref{lem3.1}, we get $\alpha+\beta=\angle(xpy)=\pi$.
From  \eqref{eq3.5}, we have $\pi=\alpha+\beta\geq\tilde\alpha+\tilde\beta$.
Thus, $\angle(\tilde x\tilde p\tilde {z_1})=\tilde\alpha\leq\pi-\tilde\beta=\angle(\tilde x\tilde p\tilde {z_2})$, and the two points $\tilde z_1 $ and $\tilde z_2$ are on the parallel
$r=d(p,z),$
by \eqref{eq3.4}.
By applying Lemma \ref{KL2.1}  for the pair of points $\tilde{z_1}$ and $\tilde {z_2}$ on  the parallel $r=d( p,z)$, we obtain
\begin{equation}\label{eq3.6}
    d(\tilde x,\tilde {z_1})\leq d(\tilde x,\tilde {z_2}).
\end{equation}
 By \eqref{eq3.7}, \eqref{Neq4.7} and \eqref{eq3.6}, we have
$$\pi\leq d(\tilde x,\tilde z_1)+d(\tilde z_2,\tilde y)\leq d(\tilde x,\tilde z_2)+d(\tilde z_2,\tilde y)=\pi. $$
This implies that  equality holds in \eqref{eq3.6} and we get, by Lemma \ref{KL2.1}, $\tilde\alpha=\pi-\tilde\beta,$ and 
$\tilde {z_1}=\tilde {z_2}.$
Since $\alpha+\beta=\tilde\alpha+\tilde\beta=\pi,$ 
$\alpha\geq\tilde\alpha,$ and $\beta\geq\tilde\beta$ by \eqref{eq3.5}, 
we obtain 
$\alpha=\tilde\alpha$ and $\beta=\tilde\beta.$
By  \eqref{eq4.4}, and \eqref{eq3.7},
$$d(x,z)+d(z,y)=d(\tilde x,\tilde z)+d(\tilde z,\tilde y)=d(\tilde x,\tilde y)=d( x, y)=\pi$$
holds.
Here we put $\tilde z_1=\tilde z_2=:\tilde z.$
Therefore, we get
$d(x,y)=d(x,z)+d(z,y),$
and it is clear now  that  there exists a minimal geodesic segment joining $x$ to $y$ passing through $z.$
$\qedd$
\end{proof}
{\it Proof of Main Theorem.} 
By scaling the Riemannian metric, we may assume that
$\pi=d(\tilde p,\tilde q)=\diam (\widetilde M).$
It is clear from Lemma \ref{lem3.0} that
the diameter of $M$ does not exceed that of $\widetilde M.$
Suppose that $\diam(M)=\diam(\wt M)=\pi.$
Thus,
there exist 
 $x,y\in M$  satisfying $d(x,y)=\pi.$
If the base point $p$ admits a point $q\in M$ satisfying $d(p,q)=\pi,$ then
it is not difficult to construct an isometry between $M$ and the two-sphere of revolution (see \cite{B}).
Therefore, we may assume that $0<d(p,x)<\pi$ and $0<d(p,y)<\pi.$
Let $\gamma : [0,\infty)\to M$ denote the unit speed geodesic emanating from $p$ passing through
$x$ such that $\gamma|_{[0,d(p,x)]} $ is minimal.
From Lemma \ref{lem3.2}, it follows that $\gamma|_{[ d(p,x),d(p,x)+\pi]}$
is  a minimal geodesic segment joining $x$ to $y.$
In particular, the subarc $\gamma|_{[d(p,x),\pi]}$ is a unique minimal geodesic segment
joining $x$ to $q_\pi:=\gamma(\pi)$.
From Lemma \ref{lem3.2} again, it follows that  the geodesic triangle $\triangle(pxq_\pi)$ admits a geodesic triangle $\wt \triangle(pxq_\pi)$
in the two-sphere of revolution satisfying $\angle(xpq_\pi)=\angle(\tilde x\tilde p\tilde q_\pi)$ and \eqref{eq2.1} for $y=q_\pi,\tilde y=\tilde q_\pi.$ 
Thus, by Theorem \ref{th2.7}, $\pi=\angle(pxq_\pi)=\angle(\tilde p\tilde x\tilde q_\pi),$ $\tilde q_\pi=\tilde q, $ and $d(p,q_\pi)=d(\tilde p,\tilde q_\pi)=\pi$, since
$\pi=d(p,x)+d(x,q_\pi)=d(\tilde p,\tilde x)+d(\tilde x,\tilde q_\pi).$ 
Therefore, the base point $p$ admits a point $q$ satisfying $d(p,q)=\pi.$


\bigskip

\begin{center}
Nathaphon BOONNAM \\

\medskip
Department of Applied Mathematics and Informatics\\
Faculty of Science and Industrial Technology, Prince of Songkla University\\
Muang Suratthani, Suratthani\\
84000 Thailand

\medskip
{\it e--mail } :
{\tt nathaphon.b@psu.ac.th}\\

\end{center}

\end{document}